\chardef\coloryes=1 
\chardef\isitdraft=1 
   \def\version{9} 
   \def\eqref#1{({\ref{#1}})}                
\definecolor{labelkey}{gray}{.3}
\definecolor{refkey}{rgb}{.3,0.3,0.3}
  \def\startnewsection#1#2{\section{#1}\label{#2}\setcounter{equation}{0}}   
  \def\nnewpage{} 
\begin{document}
\def\ques{{\cor \underline{??????}\cob}}
\def\nto#1{{\coC \footnote{\em \coC #1}}}
\def\fractext#1#2{{#1}/{#2}}
\def\fracsm#1#2{{\textstyle{\frac{#1}{#2}}}}   
\def\nnonumber{}


\def\cor{{}}
\def\cog{{}}
\def\cob{{}}
\def\coe{{}}
\def\coA{{}}
\def\coB{{}}
\def\coC{{}}
\def\coD{{}}
\def\coE{{}}
\def\coF{{}}

\ifnum\coloryes=1

  \definecolor{coloraaaa}{rgb}{0.1,0.2,0.8}
  \definecolor{colorbbbb}{rgb}{0.1,0.7,0.1}
  \definecolor{colorcccc}{rgb}{0.8,0.3,0.9}
  \definecolor{colordddd}{rgb}{0.0,.5,0.0}
  \definecolor{coloreeee}{rgb}{0.8,0.3,0.9}
  \definecolor{colorffff}{rgb}{0.8,0.3,0.9}
  \definecolor{colorgggg}{rgb}{0.5,0.0,0.4}

 \def\cog{\color{colordddd}}
 \def\cob{\color{black}}
 \def\cor{\color{red}}
 \def\coe{\color{colorgggg}}

 \def\coA{\color{coloraaaa}}
 \def\coB{\color{colorbbbb}}
 \def\coC{\color{colorcccc}}
 \def\coD{\color{colordddd}}
 \def\coE{\color{coloreeee}}
 \def\coF{\color{colorffff}}
 \def\coG{\color{colorgggg}}

\fi
\ifnum\isitdraft=1
   \chardef\coloryes=1 
   \baselineskip=17pt
\pagestyle{myheadings}
\reversemarginpar

\def\const{\mathop{\rm const}\nolimits}  
\def\diam{\mathop{\rm diam}\nolimits}    

 \def\llabel#1{\label{#1}{\ \mbox{\rm\color{red} {#1}\color{black}}}}

\def\rref#1{{\ref{#1}{\rm \tiny \fbox{\tiny #1}}}}
\def\theequation{\fbox{\bf \thesection.\arabic{equation}}}
\def\ccite#1{{\cite{#1}{\rm \tiny ({#1})}}}

\def\startnewsection#1#2{\newpage\cog \section{#1}\cob\label{#2}

\setcounter{equation}{0}
\pagestyle{fancy}

\lhead{\cob Section~\ref{#2}, #1 }
\cfoot{}
\rfoot{\thepage}
\lfoot{\cob{\today,~\currenttime}~(c75-iklt2, Version~\fbox{\version})}}
\chead{}
\rhead{\thepage}
\def\nnewpage{\newpage}

\newcounter{startcurrpage}
\newcounter{currpage}

\def\llll#1{{\rm\tiny\fbox{#1}}}
   \def\blackdot{{\color{red}{\hskip-.0truecm\rule[-1mm]{4mm}{4mm}\hskip.2truecm}}\hskip-.3truecm}
   \def\bdot{{\coC {\hskip-.0truecm\rule[-1mm]{4mm}{4mm}\hskip.2truecm}}\hskip-.3truecm}
   \def\purpledot{{\coA{\rule[0mm]{4mm}{4mm}}\cob}}
   \def\pdot{\purpledot}
\else
   \baselineskip=15pt
   \def\blackdot{{\rule[-3mm]{8mm}{8mm}}}
   \def\purpledot{{\rule[-3mm]{8mm}{8mm}}}
   \def\pdot{}
\fi

\def\tdot{\fbox{\fbox{\bf\tiny I'm here; \today \ \currenttime}}}
\def\nts#1{{\hbox{\bf ~#1~}}} 
\def\nts#1{{\cor\hbox{\bf ~#1~}}} 
\def\ntsf#1{\footnote{\hbox{\bf ~#1~}}} 
\def\ntsf#1{\footnote{\cor\hbox{\bf ~#1~}}} 
\def\bigline#1{~\\\hskip2truecm~~~~{#1}{#1}{#1}{#1}{#1}{#1}{#1}{#1}{#1}{#1}{#1}{#1}{#1}{#1}{#1}{#1}{#1}{#1}{#1}{#1}{#1}\\}
\def\biglineb{\bigline{$\downarrow\,$ $\downarrow\,$}}
\def\biglinem{\bigline{---}}
\def\biglinee{\bigline{$\uparrow\,$ $\uparrow\,$}}

\def\tilde{\widetilde}

\newtheorem{Theorem}{Theorem}[section]
\newtheorem{Corollary}[Theorem]{Corollary}
\newtheorem{Proposition}[Theorem]{Proposition}
\newtheorem{Lemma}[Theorem]{Lemma}
\newtheorem{Remark}[Theorem]{Remark}
\newtheorem{Example}[Theorem]{Example}
\newtheorem{Assumption}[Theorem]{Assumption}
\newtheorem{definition}{Definition}[section]
\def\theequation{\thesection.\arabic{equation}}
\def\endproof{\hfill$\Box$\\}
\def\square{\hfill$\Box$\\}
\def\comma{ {\rm ,\qquad{}} }            
\def\commaone{ {\rm ,\qquad{}} }         
\def\dist{\mathop{\rm dist}\nolimits}    
\def\sgn{\mathop{\rm sgn\,}\nolimits}    
\def\Tr{\mathop{\rm Tr}\nolimits}    
\def\div{\mathop{\rm div}\nolimits}    
\def\supp{\mathop{\rm supp}\nolimits}    
\def\divtwo{\mathop{{\rm div}_2\,}\nolimits}    
\def\re{\mathop{\rm {\mathbb R}e}\nolimits}    
\def\div{\mathop{\rm{Lip}}\nolimits}   
\def\indeq{\qquad{}}                     
\def\period{.}                           
\def\semicolon{\,;}                      

\title{Controlled Markov Processes with AVaR Criteria for Unbounded Costs}
\author{Kerem U\u{g}urlu}
\maketitle

\date{}

\begin{center}
\end{center}

\medskip

\indent Department of Mathematics, University of Southern California, Los Angeles, CA 90089\\
\indent e-mails:kugurlu@usc.edu

\begin{abstract}
In this paper, we consider the control problem with the Average-Value-at-Risk (AVaR) criteria of the possibly unbounded $L^{1}$-costs in infinite horizon on a Markov Decision Process (MDP). With a suitable state aggregation and by choosing a priori a global variable $s$ heuristically, we show that there exist optimal policies for the infinite horizon problem.
\end{abstract}

\noindent\thanks{\em Mathematics Subject Classification\/}: 90C39, 93E20

\noindent\thanks{\em Keywords:\/}Markov Decision Problem, Average-Value-at-Risk, Optimal Control;

\section{Introduction} 

In classical models, the optimization problem has been solved by expected performance criteria. Beginning with Bellman \cite{ref3}, risk neutral performance evaluation has been used via dynamic programming techniques. This methodology has seen huge development both in theory and practice since then. However, in practice expected values are not appropriate  to measure the performance criteria. Due to that, risk aversive approaches have been begun to forecast the corresponding problem and its outcomes specifically by utility functions (see e.g. \cite{key-200,key-21}).  To put risk-averse preferences into an axiomatic framework, with the seminal paper of Artzner et al. \cite{key-1}, the risk assessment gained new aspects for random outcomes. In \cite{key-1}, the concept of \textit{risk measure} has been defined and theoretical framework has been established. We will use this framework to measure risk aversion. We replace the risk neutral expectation operator with this risk averse operator and study the optimal control of infinite sum of cost functions and characterize the optimal policy stationary as in risk neutral case but in a \textit{state-aggregated} setting. 

The rest of the paper is as follows. In Section 2, we give the preliminary theoretical framework. In Section 3, we derive the dynamic programming equations for MDP using AVaR criteria for the infinite time horizon and conclude the paper by giving an application of our results to classical LQ problem to illustrate our results.
\subsection{Controlled Markov Processes}
We take the control model $\mathcal{M} = \{ \mathcal{M}_n, n \in \mathbb{N}_0 \}$, where for each $n \in \mathbb{N}_0$, 
\begin{equation}
\mathcal{M}_n := (X_n, A_n, \mathbb{K}_n, F_n, c_n)
\end{equation}
with the following components:
\begin{itemize}
\item $X_n$ and $A_n$ denote the state and action (or control) spaces, where $X_n$ take values in a Borel set $X$ whereas $A_n$ take values in a Borel set $A$. 
\item For each $x \in X_n$, let $A_n(x) \subset A_n$ be the set of all admissible controls in the state $x_n = x$.  Then 
\begin{equation}
\mathbb{K}_n := \{ (x,a): x \in X_n, a \in A_n(x) \},
\end{equation}
stands for the set of feasible state-action pairs at time $n$, where we assume that $\mathbb{K}_n$ is a Borel subset of $X_n \times A_n$.
\item We let $x_{n+1} = F_n(x_n,a_n, \xi_n)$, for all $n = 0,1,...$ with $x_n \in X_n$ and $a_n \in A_n$ as described above, with independent random disturbances $\xi_n \in S_n$ having probability distributions $\mu_n$, where the $S_n$ are Borel spaces. 
\item $c_n(x,a): \mathbb{K}_n \rightarrow \mathbb{R}$ stands for the deterministic cost function at stage $n \in \mathbb{N}_0$ with  $(x,a) \in \mathbb{K}_n$.
\end{itemize}
The random variables $\{\xi_n\}_{n\geq 0}$  are defined on a common probability space $(\Omega, \mathcal{F},\{\mathcal{F}_n\}_{n\geq 0}, \mathbb{P})$, where $\mathbb{P}$ is the reference probability space with each $\xi_n$ measurable with respect to sigma algebra $\mathcal{F}_n$ with $\mathcal{F} = \sigma (\cup_{n=0}^\infty \mathcal{F}_n)$. Based on the action $a \in \mathbb{K}_n(x)$ chosen at time $n$, we assume that $A_n$ is $\mathcal{F}_n = \sigma(X_0,A_0,...,X_n)$-measurable, i.e. our decision might depend entirely on the history $h_n$, where $h_n = (x_0,a_0,x_1,...,a_{n-1}, x_n) \in H_n$ is the history up to time $n$, where define recursively 
\begin{equation}
H_0 := X, \quad H_{n+1} := H_n \times A \times X
\end{equation} For each $n \in \mathbb{N}_0$, let $\mathbb{F}_n$ be the family of measurable functions $f_n: H_n \rightarrow A_n$ such that 
\begin{equation}
f_n (x) \in A_n(x),
\end{equation}
for all $x \in X_n$. A sequence $\pi = \{ f_n \}$ of functions $f_n \in \mathbb{F}_n$ for all $n \in \mathbb{N}_0$ is called a policy. We denote by $\Pi$ the set of all the policies. Then for each policy $\pi \in \Pi$ and initial state $x \in \mathbf{X}$, a stochastic process $\{ (x_n,a_n) \}$ and a probability measure $\mathbb{P}_x^\pi$ is defined on $(\Omega,\mathcal{F})$ in a canonical way, where $x_n$ and $a_n$ represent the state and the control at time $n \in \mathbb{N}_0$. The expectation operator with respect to $\mathbb{P}_x^\pi$ is denoted by $\mathbb{E}_x^\pi$.The distribution of $X_{n+1}$ is given by the transition kernel $\mathbb{Q}$ from $X \times A$ to $X$  as follows:
\begin{align*}
&P^{\pi}(X_{n+1} \in B_x | X_0, g_0(X_0),...,X_n,f_n(X_0,A_0,...,X_n))
\\&=P^{\pi}(X_{n+1} \in B_x | X_n,f_n(X_0,A_0,...,X_n) )
\\&=\mathbb{Q}(B_x | X_n, g_n(X_0,A_0,...,X_n))
\end{align*}
for Borel measurable sets $B_x \subset \mathbf{X}$.
A Markov policy is of the form 
\begin{equation}
P^{\pi}(X_{n+1} \in B_x| X_n,f_n(X_0,A_0,...,X_n))
= \mathbb{Q}(B_x | X_n, f_n(X_n)).
\end{equation} That is to say, the Markov policy $\pi = \{f_n\}_{n \geq 0}$ depends only on current state $X_n$. We denote the set of all Markovian policies as $\Pi^M$. Similarly, the stationary policy is of the form $\pi = \{f\}_{n \geq 1}$ with 
\begin{equation}
P^{\pi}(X_{n+1} \in B_x| X_n,f_n(X_0,A_0,...,X_n))
= \mathbb{Q}(B_x | X_n, f(X_n)),
\end{equation}
i.e. we apply the same rule for each time episode $n$.
Suppose, we are given a policy $\sigma=\{f_{n}\}_{n=0}^{\infty}$, then by Ionescu Tulcea theorem \cite{key-6}, there exists a unique probability measure $P^{\sigma}$ on $(\Omega,\mathcal{F})$, which ensures the consistency of the infinite horizon problem considered. Hence, for every measurable set $B\subset\mathcal{F}_n$ and all $h_n \in H_n$, $n \in \mathbb{N}_0$, we denote
\begin{align*}
P^{\sigma}(x_{1}\in B) =& P(B) \\
P^{\sigma}(x_{n+1}\in B|h_{n}) =& Q(B|x_{n},\pi_{n}(h_{n}))
\end{align*}
We consider the following cost function
\begin{equation}
C^{\infty}:=\sum_{n=0}^{\infty} c_{n}(x_n,a_n),
\end{equation} for the infinite planning horizon and 
\begin{equation}
C^N = \sum_{n=0}^N c_n(x_n,a_n)
\end{equation} for the finite planning horizon for some terminal time $N \in \mathbb{N}_0$. We take that the cost functions $\{c_{n}(x_n,a_n)\}_{n \geq 0}$ are non-negative and $C^N$ and $C^{\infty}$ belong to
space $L^{1}(\Omega,\mathcal{F},\mathbb{P}_0)$. We start from the following two well-studied optimization problems for controlled Markov processes. The first one is called \textit{finite horizon expected value problem},
where we want to find a policy $\pi=\{g_n\}_{n=0}^{N}$ with
the minimization of the expected cost:
\begin{align*}
\min_{\pi \in \Pi}\mathbb{E}_x^{\pi}[\sum_{n=0}^{N}c_{n}(x_{n},a_{n})]
\end{align*}
where $a_{n}=\pi_{n}(x_{0},x_{1},...,x_{n})$ and $c_{n}(x_{n},a_{n})$
is measurable for each $n=0,..,N$. The second problem is the infinite horizon expected value problem. The objective is to find a policy $\pi=\{g_{n}\}_{n=0}^{\infty}$ with
the minimization of the expected cost:
\begin{align*}
\min_{\pi \in \Pi}\mathbb{E}_x^{\pi}[\sum_{n=0}^{\infty} c_{n}(x_{n},a_{n})]
\end{align*}
Under some assumptions the first optimization problem has solution in form of Markov policies, whereas in infinite case the policy is stationary. In both cases, the optimal policies can be found by solving corresponding dynamic programming equations. Our goal is to study the infinite horizon problem, where we use a \textit{risk-averse operator} $\rho$ instead of the expectation operator and look for stationary optimal policy under some conditions.
\subsection{Coherent risk measures on $L^1$}
We introduce the corresponding risk averse operators that
we will be working on throughout the rest of the paper. 
\begin{definition}
A function $\rho:L^1\rightarrow\mathbb{R}$ is said to
be a \textit{coherent risk measure} if it satisfies the following
axioms \cite{key-1}:
\begin{itemize}
\item $\rho(\lambda X+(1-\lambda)Y)\leq\lambda\rho(X)+(1-\lambda)\rho(Y)$
$\forall\lambda\in(0,1)$, $X,Y \in L^p$ \label{convexity-1};
\item If $X \leq Y$ $\mathbb{P}-$a.s. then $\rho(X) \leq \rho(Y)$, $\forall X,Y \in L^p$
\item $\rho(c+X) = c + \rho(X)$, $\forall c\in\mathbb{R}$, $X\in L^{p}$;
\item $\rho(\beta X)=\beta\rho(X),$ $\forall X\in L^p$, $\beta \geq 0$.
\end{itemize}
\end{definition}
The particular risk averse operator that we will be working with is
the $\mathrm{AVaR}_{\alpha}(X)$
\begin{definition}
Let $X\in L^1(\Omega,\mathcal{F},\mathbb{P})$ be a real-valued
random variable and let\\ $\alpha \in(0,1)$. 
\end{definition}
\begin{itemize}
\item We define the \textit{Value-at-Risk} of $X$ at level $\alpha$, $\mathrm{VaR}_{\alpha}(X)$, by 
\begin{equation}
\mathrm{VaR}_{\alpha}(X) = \inf \left \{x\in\mathbb{R}:\mathbb{P}(X\leq x)\geq \alpha \right \}
\end{equation}
\item We define the coherent risk measure, the \textit{Average-Value-at-Risk}
of $X$ at level $\alpha$, denoted by $\mathrm{AVaR}_{\alpha}(X)$ as 
\begin{equation}
\mathrm{AVaR}_{\alpha}(X)=\frac{1}{1-\alpha}\int_{\alpha}^{1}\mathrm{VaR}_t(X)dt
\end{equation}
\end{itemize}
We will also need the following alternative representation for $\mathrm{AVaR}_{\alpha}(X)$
as shown in \cite{key-8}.
\begin{Lemma}
\label{lemma_static_avar_repre:}Let $X\in L^{p}(\Omega,\mathcal{F},\mathbb{P})$
be a real-valued random variable and let 
$\alpha \in(0,1)$. Then it holds that 
\begin{equation} 
\mathrm{AVaR}_{\alpha}(X)=\min_{s\in\mathbb{R}}\left\{s+\frac{1}{1-\alpha}\mathbb{E}[(X-s)^{+}] \right \},\label{avar_representation}
\end{equation}
where the minimum is attained at $s = \mathrm{VaR}_{\alpha}(X)$.
\end{Lemma}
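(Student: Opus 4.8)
The plan is to prove the variational representation
\[
\mathrm{AVaR}_{\alpha}(X)=\min_{s\in\mathbb{R}}\left\{s+\frac{1}{1-\alpha}\mathbb{E}[(X-s)^{+}]\right\}
\]
by analyzing the auxiliary function $\Phi(s):=s+\frac{1}{1-\alpha}\mathbb{E}[(X-s)^{+}]$ directly, showing it is minimized at $s=\mathrm{VaR}_{\alpha}(X)$ and that its minimal value equals $\frac{1}{1-\alpha}\int_{\alpha}^{1}\mathrm{VaR}_t(X)\,dt$. First I would verify that $\Phi$ is a well-defined, finite, convex function on $\mathbb{R}$: finiteness follows from $X\in L^{1}$ (hence $\mathbb{E}[(X-s)^{+}]<\infty$ for every $s$), and convexity follows because $s\mapsto(X-s)^{+}$ is convex for each fixed $\omega$, so its expectation is convex, and adding the linear term $s$ preserves convexity. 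Convexity guarantees that any stationary point (in the subdifferential sense) is a global minimizer, which reduces the problem to a first-order condition.

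\medskip

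Next I would compute the one-sided derivatives of $\Phi$. Writing $F(x)=\mathbb{P}(X\leq x)$ for the distribution function, the map $s\mapsto\mathbb{E}[(X-s)^{+}]$ has left derivative $-\mathbb{P}(X\geq s)=-(1-F(s^{-}))$ and right derivative $-\mathbb{P}(X>s)=-(1-F(s))$, which I would justify by differentiating under the expectation (dominated convergence, using the $L^{1}$ bound and the Lipschitz-in-$s$ property of $(X-s)^{+}$ with constant $1$). Consequently
\[
\Phi'_{-}(s)=1-\frac{1-F(s^{-})}{1-\alpha},\qquad
\Phi'_{+}(s)=1-\frac{1-F(s)}{1-\alpha}.
\]
The optimality condition $\Phi'_{-}(s)\leq 0\leq\Phi'_{+}(s)$ then becomes $F(s^{-})\leq\alpha\leq F(s)$, and the smallest such $s$ is exactly $s^{*}=\mathrm{VaR}_{\alpha}(X)=\inf\{x:F(x)\geq\alpha\}$ by definition; right-continuity of $F$ gives $F(s^{*})\geq\alpha$ and the infimum characterization gives $F(s^{*-})\leq\alpha$. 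This identifies the minimizer.

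\medskip

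Finally I would evaluate $\Phi(s^{*})$ and match it to the integral definition of $\mathrm{AVaR}_{\alpha}$. The cleanest route is the layer-cake identity $\mathbb{E}[(X-s^{*})^{+}]=\int_{s^{*}}^{\infty}(1-F(x))\,dx$, combined with the standard fact that $\mathrm{VaR}_t(X)=F^{-1}(t)$ is the generalized inverse of $F$, so that the change of variables $t=F(x)$ (handled carefully at the flat pieces and jumps of $F$) converts $\frac{1}{1-\alpha}\int_{\alpha}^{1}\mathrm{VaR}_t(X)\,dt$ into $s^{*}+\frac{1}{1-\alpha}\int_{s^{*}}^{\infty}(1-F(x))\,dx$. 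Alternatively, one can avoid $F^{-1}$ entirely by verifying both expressions against a common integral: show that the integral definition equals $s^{*}+\frac{1}{1-\alpha}\mathbb{E}[(X-s^{*})^{+}]$ using the identity $\int_{\alpha}^{1}\mathrm{VaR}_t\,dt=\int_{\alpha}^{1}(\mathrm{VaR}_t-s^{*})\,dt+(1-\alpha)s^{*}$ and Fubini on the nonnegative integrand $(\mathrm{VaR}_t-s^{*})^{+}$.

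\medskip

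The main obstacle is the bookkeeping at atoms and flat stretches of $F$, i.e. when $\mathrm{VaR}_{\alpha}$ sits at a jump of $F$ or when $F$ is constant on an interval containing the quantile: there the generalized inverse and the change of variables must be handled with the correct one-sided conventions so that the boundary contributions cancel exactly. Establishing the derivative formulas $\Phi'_{\pm}$ rigorously (especially the distinction between $\mathbb{P}(X\geq s)$ and $\mathbb{P}(X>s)$ coming from the atom at $s$) is where the care is needed; once the subdifferential computation is pinned down, convexity does the rest and the evaluation is routine.
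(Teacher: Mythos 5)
The paper does not actually prove this lemma: it is quoted from Rockafellar and Uryasev \cite{key-8} with a bare citation, so there is no internal argument to compare against. Your proposal is a correct, essentially self-contained reconstruction of the standard proof from that reference. The three ingredients are all right: (i) convexity and finiteness of $\Phi(s)=s+\frac{1}{1-\alpha}\mathbb{E}[(X-s)^{+}]$ on $L^{1}$; (ii) the one-sided derivatives $\Phi'_{-}(s)=1-\frac{1-F(s^{-})}{1-\alpha}$ and $\Phi'_{+}(s)=1-\frac{1-F(s)}{1-\alpha}$, whose sign condition $F(s^{-})\leq\alpha\leq F(s)$ is satisfied by $s^{*}=\mathrm{VaR}_{\alpha}(X)$ via right-continuity of $F$ and the infimum characterization; (iii) the evaluation $\Phi(s^{*})=\frac{1}{1-\alpha}\int_{\alpha}^{1}\mathrm{VaR}_{t}(X)\,dt$. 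For step (iii) the cleanest route, which avoids all the flat-piece and atom bookkeeping you worry about, is the quantile transform: with $U$ uniform on $(0,1)$ one has $X\stackrel{d}{=}F^{-1}(U)$, hence
\begin{equation*}
\mathbb{E}[(X-s^{*})^{+}]=\int_{0}^{1}\bigl(F^{-1}(t)-s^{*}\bigr)^{+}dt=\int_{\alpha}^{1}\bigl(F^{-1}(t)-s^{*}\bigr)\,dt,
\end{equation*}
where the second equality uses only that $F^{-1}$ is nondecreasing with $F^{-1}(\alpha)=s^{*}$, so the integrand vanishes (is nonpositive, hence its positive part is zero) on $(0,\alpha)$ and is nonnegative on $(\alpha,1)$; substituting gives $\Phi(s^{*})=\frac{1}{1-\alpha}\int_{\alpha}^{1}F^{-1}(t)\,dt$ directly. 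One cosmetic point: the set of minimizers is in general a closed interval of which $\mathrm{VaR}_{\alpha}(X)$ is the left endpoint, which is consistent with the lemma's claim that the minimum \emph{is attained} there. No gaps; the argument is sound.
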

\begin{Remark}
We note from the representation above that the $\mathrm{AVaR}_\alpha(X)$ is real-valued for any $X \in L^1(\Omega,\mathcal{F},\mathbb{P})$. 
\end{Remark}
\subsection{Time Consistency}
\begin{definition}
Let $L^0(\mathcal{F}_n)$ be the vector space of all real-valued, $\mathcal{F}_n$-measurable random variables on the space $(\Omega, \mathcal{F},\mathcal{F}_n, \mathbb{P})$ defined above. A one-step coherent dynamic risk measure on $L^0(\mathcal{F}_{n+1})$ is a sequence of mappings such that 
\begin{equation}
\rho_t : L^0(\mathcal{F}_{n+1}) \rightarrow L^0(\mathcal{F}_n), n = 0,..., N-1. 
\end{equation}
that satify the followings
\begin{itemize}
\item $\rho_n(\lambda X+(1-\lambda)Y)\leq\lambda\rho_n(X)+(1-\lambda)\rho_n(Y)$
$\forall\lambda\in(0,1)$, $Z,W \in L^0(\mathcal{F}_{n+1})$ \label{convexity-1};
\item If $X \leq Y$ $\mathbb{P}-$a.s. then $\rho_{n+1}(X)\leq \rho_{n+1}(Y)$, $\forall X,Y\in L^0(\mathcal{F}_{n+1})$
\item $\rho_n(c+X)=c+\rho_n(X)$, $\forall c \in L^0(\mathcal{F}_{n})$, $X\in L^0(\mathcal{F}_{n+1})$;
\item $\rho_n(\beta X)=\beta\rho_n(X),$ $\forall X\in L^0(\mathcal{F}_{n})$, $\beta \geq 0$
\end{itemize}
\end{definition}
\begin{definition} A dynamic risk measure $(\rho_n)_{n=0}^{N-1}$ on $L^0(\mathcal{F}_N)$ is called time-consistent if for all $X,Y \in L^0(\mathcal{F}_N)$ and $n = 0,..., N-1$, $\rho_{n+1}(X) \geq \rho_{n+1}(Y)$ implies $\rho_n(X) \geq \rho_n(Y)$. 
\end{definition}

Another way to define time consistency is from the point of view of optimal policies (see also \cite{key-15}). Intuitively, the sequence of optimization problems is said to be dynamically consistent, if the optimal strategies obtained when solving the original problem at time $t$ remain optimal for all subsequent problems. More precisely, if a policy $\pi$ is optimal on the time interval $[s,T]$, then it is also optimal on the sub-interval $[t,T]$ for every $t$ with $s\leq t \leq T$. 

\begin{Remark} \label{Remark31} Given that the probability space is atomless, it is shown in \cite{key-15} and \cite{key-18} that the only law invariant coherent risk measure operators $\rho$ on $(\Omega, \mathcal{F}, \mathcal{F}_{n=0}^N, \mathbb{P})$, i.e. 
\begin{equation}
X \stackrel{d}= Y \Rightarrow \rho(X) = \rho(Y)
\end{equation} satisfying
\begin{equation}
\rho (Z) = \rho( \rho| \mathcal{F}_1(... \rho| \mathcal{F}_{N-1})(Z) ),
\end{equation}
for all random variables $Z$ are $\mathrm{esssup}(Z)$ and expectation $\mathbb{E}(Z)$ operators. This suggests that optimization problems with most of the coherent risk measures are not time consistent.  
\end{Remark}
\section{Infinite Horizon Problem}
We are interested in solving the following optimization problem in the infinite horizon. 
\begin{equation}
\label{main_problem}
\min_{\pi \in \Pi}\mathrm{AVaR}_{\alpha}^\pi(\sum_{n=0}^{\infty} c(x_n,a_n)),
\end{equation}
First, we put the following assumptions on the problem.
\begin{Assumption} \label{assumptions}
For every $n \in \mathbb{N}_0$, we impose the following assumptions  on the problem:
\begin{enumerate} 
\item The cost function $c_n: \mathbb{K}_n \rightarrow \mathbb{R}$ is nonnegative, lower semicontinuous (l.s.c.), that is if $(x_k,a_k) \rightarrow (x,a)$, then 
\begin{equation}
\liminf_{k \rightarrow \infty} c_n(x^k, a^k) \geq c_n (x,a)
\end{equation} and inf-compact on $\mathbb{K}_n$, i.e., for every $x \in X_n$ and $r \in \mathbb{R}$, the set 
\begin{equation}
\{ a \in A_n(x)| c_n(x,a) \leq r \}
\end{equation}
is compact.
\item The function $(x,a) \rightarrow \int v(x',s-c)\mathbb{Q}(dx'| x,a)$ is l.s.c. for all l.s.c. functions $v \geq 0$. 
\item The set $A_n(x)$ is compact for every $x \in X_n$ and for every $n \in \mathbb{N}_0$.
\item The system function $x_{n+1} = F_n(x_n,a_n,\xi_n)$ is measurable as a mapping\\ $F_n: X_n \times A_n \times S_n \rightarrow X_{n+1}$, and $(x,a) \rightarrow F_n(x,a,s)$ is continuous on $\mathbb{K}_n$ for every $s \in S_n$.
\item The multifunction also called point-to-set function $x \rightarrow A_n(x)$, from $\mathbf{X}$ to $A$ is upper semicontinuous (u.s.c.) that is, if $\{x_n\} \subset \mathbb{X}$ and $\{a_n\} \subset A$ are sequences such that 
\begin{equation}
x_n \rightarrow x^*, \quad a_n \in A(x_n) \quad \mathrm{ for all } \quad n, \quad \mathrm{ and }\quad a_n \rightarrow a^*, 
\end{equation}
then $a^*$ is in $A_n(x^*)$.
\item There exists a policy $\pi \in \Pi$ such that $V_0(x,\pi) < M$ for all $x \in X_0$. 
\end{enumerate}
\end{Assumption}
To solve (\ref{main_problem}), we first rewrite the infinite horizon problem as follows:
\begin{align*}
\inf_{\pi}\mathrm{AVaR}_{\alpha}^{\pi}(C^{\infty}|X_{0}=x)= & \inf_{\pi\in\Pi}\inf_{s\in\mathbb{R}}\left\{ s+\frac{1}{1-\alpha}\mathbb{E}_{x}^{\pi}[(C^{\infty}-s)^{+}]\right\} \\
= & \inf_{s\in\mathbb{R}}\inf_{\pi\in\Pi}\left\{ s+\frac{1}{1-\alpha}\mathbb{E}_{x}^{\pi}[(C^{\infty}-s)^{+}]\right\} \\
= & \inf_{s\in\mathbb{R}}\left\{ s+\frac{1}{1-\alpha}\inf_{\pi\in\Pi}\mathbb{E}_{x}^{\pi}[(C^{\infty}-s)^{+}]\right\} 
\end{align*}
Based on this representation, we investigate the inner optimization problem for finite time $N$
as in \cite{key-4}. Let $n = 0,1,2,...,N$. We define
\begin{align}
w_{N\pi}(x,s) & := \mathbb{E}_x^{\pi}[(C^N - s)^+],~~x\in X,\quad s\in \mathbb{R},\pi \in \Pi,\\
w_{N}(x,s)    & := \inf_{\pi \in \Pi}w_{N\pi}(x,s),~~x\in X,\quad s\in \mathbb{R},
\end{align}
We work with the Markov Decision Model with a 2-dimensional state space $\tilde{X} \triangleq X \times \mathbb{R}$. The second component of the state $(x_n,s_n) \in \tilde{X}$ gives the relevant information of the history of the process. We take that there is no running cost and we assume that the terminal cost function is given by $V_{-1\pi}(x,s) := V_{-1}(x,s):= s^{-}$. We take the decision rules $f_n:\tilde{X} \rightarrow A$ such that $f_n(x,s) \in A_n(x)$ and denote by $\Pi^M$ the set of Markov policies $\pi = (f_0,f_1,...,)$, where $f_n$ are decision rules. Here, by Markov policy, we mean that the decision at time $n$ depends only on the current state $x$ and as well as  on the global variable $s$ as to be seen in the proof below.  We denote for
\begin{equation}
v \in \mathbb{M}(\tilde{X}) := \{ v : \tilde{X} \rightarrow \mathbb{R}_+: \mathrm{ measurable } \}
\end{equation} the operators:
\begin{equation}
Lv(x,s,a) := \int v (x',s-c)\mathbb{Q}(dx' | x,a ), ~~(x,s) \in \tilde{X}, a \in A_n(x)
\end{equation}
and 
\begin{equation*}
T_f v(x,s) := \int v (x',s-c)\mathbb{Q}(dx' | x,f(x,s) ), ~~(x,s) \in \tilde{X}
\end{equation*}
The minimal cost operator of the Markov Decision Model is given by 
\begin{equation}
Tv(x,s) = \inf_{a \in A_n(x)} Lv(x,s,a).
\end{equation}
For a policy $\pi = (f_0, f_1, f_2,...) \in \Pi^M$. We denote by $\vec{\pi} = (f_1,f_2,...)$ the shifted policy. We define for $\pi \in \Pi^M$ and $n = -1,0,1,...,N$:
\begin{align*}
V_{n+1,\pi} &:= T_{f_0}V_{n\pi},
\\
V_{n+1} &:= \inf_{\pi}V_{n+1\pi} 
\\
&= TV_n.
\end{align*}
A decision rule $f_n^{*}$ with the property that $V_n = T_{f^{*}_n}V_{n-1}$ is called the minimizer of $V_n$. We have \textit{Markovian} policies $\Pi^M \subset \Pi$ in the following sense: Given the global variable $s$, for every $\sigma = (f_0,f_1,...) \in \Pi^M$ we find a policy $\pi = (g_0,g_1,...) \in \Pi$ such that \begin{align*}
g_0 (x_0) &:= f_0(x_0,s) \\
g_1 (x_0,a_0, x_1) &:= f_1(x_1,s-c_0) \\
\vdots &:= \vdots
\end{align*} 
We remark here that a \textit{Markovian} policy $\sigma = (f_0,f_1,...,) \in \Pi^M$ also depends on the history of the process but not on the whole information. The necessary information at time $n$ of the history $h_n = (x_0,a_0,x_1,...,a_{n-1},x_n)$ are the state $x_n$ and the necessary information  $s_n \triangleq s_0 - c_0 -c_1 -...-c_{n-1}$. This dependence of the past and the optimality of the Markovian policy is shown in the following theorem.
\begin{Theorem}\cite{key-4}
\label{augment_info} For a given policy $\sigma$, the
only necessary information at time $n$ of the history $h_{n}=(x_{0},a_{0},x_{1,}...,a_{n-1},x_{n})$
are the followings
\begin{itemize}
\item the state $x_{n}$ 
\item the value $s_n =  s-c_0-c_1-...-c_{n-1}$ for $n=1,2,...,N$. 
\end{itemize}
Moreover, it holds for $n=0,1,...,N$ that 
\begin{itemize}
\item $w_{n\sigma} = V_{n\sigma}$ for $\sigma \in \Pi^M$.
\item $w_n = V_n$
\end{itemize}
If there exist minimizers $f^*_n$ of $V_n$ on all stages, then the Markov policy $\sigma^* = (f^*_0,...,f^*_N)$ is optimal for the problem
\begin{equation}
\inf_{\pi \in \Pi} \mathbb{E}_x^\pi [( C^N - s )^+]
\end{equation}
\end{Theorem}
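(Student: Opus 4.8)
The whole theorem rests on one algebraic identity: for every $n$,
\[
\Big(\sum_{k=0}^{n} c_k - s\Big)^{+} = \Big(\sum_{k=1}^{n} c_k - (s-c_0)\Big)^{+},
\]
so that subtracting the first realised cost $c_0$ from the threshold $s$ converts an $(n+1)$-stage problem started at $(x_0,s)$ into an $n$-stage problem started at $(x_1,s-c_0)$. This is exactly why the pair $(x_n,s_n)$ with $s_n=s-c_0-\cdots-c_{n-1}$ is a sufficient statistic, and it drives an induction on $n$ that I would run from the terminal index $n=-1$ upward. The base case is immediate: the empty sum gives $w_{-1,\pi}(x,s)=(0-s)^{+}=s^{-}=V_{-1}(x,s)$ for every policy, so $w_{-1}=V_{-1}$.

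First I would prove $w_{n\sigma}=V_{n\sigma}$ for a Markov policy $\sigma=(f_0,f_1,\dots)\in\Pi^M$, assuming the same identity for the shifted policy $\vec{\sigma}=(f_1,f_2,\dots)$ at level $n-1$. Conditioning on the first stage, with $a_0=f_0(x,s)$ and $c_0=c(x,a_0)$, the tower property and the identity above give
\[
w_{n\sigma}(x,s)=\int w_{n-1,\vec{\sigma}}(x_1,s-c_0)\,\mathbb{Q}(dx_1\mid x,a_0)
=T_{f_0}V_{n-1,\vec{\sigma}}(x,s)=V_{n\sigma}(x,s),
\]
the middle equality using the inductive hypothesis and the definition of $T_{f_0}$. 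The same computation shows $V_{n\sigma}$ depends on the history only through $(x_n,s_n)$, which is the sufficiency assertion of the theorem.

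Next comes the identity $w_n=V_n$, where $w_n=\inf_{\pi\in\Pi}w_{n\pi}$ ranges over \emph{all} history-dependent policies while $V_n=TV_{n-1}$ is the Markov dynamic-programming value. The inequality $w_n\le V_n$ is immediate from the embedding $\Pi^M\hookrightarrow\Pi$ together with the previous step. For the reverse inequality I would induct on the pointwise bound $w_{n-1,\pi'}\ge V_{n-1}$ (valid for every $\pi'$ once $w_{n-1}=V_{n-1}$ is known): disintegrating an arbitrary $\pi\in\Pi$ after the first transition into its continuation policy $\pi^{(1)}$ given $(x,a_0,x_1)$, which is itself admissible for the shifted subproblem at $(x_1,s-c_0)$, yields
\begin{align*}
w_{n\pi}(x,s)&=\int w_{n-1,\pi^{(1)}}(x_1,s-c_0)\,\mathbb{Q}(dx_1\mid x,a_0)\\
&\ge\int V_{n-1}(x_1,s-c_0)\,\mathbb{Q}(dx_1\mid x,a_0)=LV_{n-1}(x,s,a_0)\ge TV_{n-1}(x,s)=V_n(x,s).
\end{align*}
Taking the infimum over $\pi$ gives $w_n\ge V_n$, hence equality. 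This lower bound is the main obstacle: one must check that the continuation of an arbitrary policy is again a legitimate policy for the shifted subproblem and that the inductive hypothesis may be invoked uniformly in $(x_1,s-c_0)$, which is the measure-theoretic core (well-posedness of the disintegration and measurability of the integrands).

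Finally, optimality of $\sigma^{*}$ is obtained by running the same recursion with the infima attained. If each minimizer satisfies $V_n=T_{f_n^{*}}V_{n-1}$, then inserting these decision rules into the recursion $V_{n\sigma^{*}}=T_{f_0^{*}}V_{n-1,\vec{\sigma}^{*}}$ replaces every $\inf_a$ by its attaining action, so $V_{N\sigma^{*}}=V_N$. Since $V_{N\sigma^{*}}=w_{N\sigma^{*}}$ and $V_N=w_N=\inf_{\pi}w_{N\pi}$, the Markov policy $\sigma^{*}$ attains the infimum and is therefore optimal for $\inf_{\pi\in\Pi}\mathbb{E}_x^{\pi}[(C^N-s)^{+}]$.
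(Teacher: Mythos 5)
Your proposal is correct, and its core --- the induction on $n$ driven by the identity $\bigl(\sum_{k=0}^{n}c_k-s\bigr)^{+}=\bigl(\sum_{k=1}^{n}c_k-(s-c_0)\bigr)^{+}$, which shows $(x_n,s_n)$ is a sufficient statistic and yields $w_{n\sigma}=V_{n\sigma}$ for Markov $\sigma$ --- is exactly the paper's argument (the paper starts the induction at $n=0$ rather than $n=-1$, a cosmetic difference). Where you genuinely diverge is the step $w_n=V_n$: the paper disposes of the inequality $\inf_{\pi\in\Pi}w_{n\pi}\ge\inf_{\sigma\in\Pi^M}V_{n\sigma}$ by embedding $h_n$ into the augmented history $\tilde h_n$ and citing Theorem 2.2.3 of the B\"auerle--Rieder monograph (that history-dependent policies do not improve on Markov policies in the augmented MDP), whereas you prove it directly by disintegrating an arbitrary $\pi\in\Pi$ after the first transition, applying the inductive bound $w_{n-1,\pi^{(1)}}\ge V_{n-1}$ to the continuation policy, and minimizing over the first action to land on $TV_{n-1}=V_n$. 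Your route is self-contained and makes visible the real technical burden (measurable disintegration of the continuation policy and uniform applicability of the inductive hypothesis in $(x_1,s-c_0)$), which you correctly flag but do not fully discharge; the paper's route is shorter but hides that burden inside the citation. As a minor point, your computation also silently corrects a typo in the paper's induction step, where $\mathbb{E}_x^{\sigma}[C^{n+1}-s]$ should read $\mathbb{E}_x^{\sigma}[(C^{n+1}-s)^{+}]$; and your closing argument for the optimality of $\sigma^*$ (inserting the minimizers $f_n^*$ into the recursion to get $V_{N\sigma^*}=V_N=w_N$) supplies a step the paper asserts without proof.
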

\begin{proof}
For $n=0$, we obtain
\begin{align*}
V_{0\sigma}(x,s) &= T_{f_0}V_{-1}(x,s)\\
                 &= \int V_{-1}(x',s-c)\mathbb{Q}(dx'  | x,f_0(x,s)) \\
                 &= \int(s-c)^{-}\mathbb{Q}(dx'  | x,f_0(x,s))\\
                 &= \int(c-s)^{+}\mathbb{Q}(dx'  | x,f_0(x,s))\\
                 &= \mathbb{E}_x^\pi[ ( C_0 - s )^+ ] = w_{0\sigma}(x,s)
\end{align*}
Next by induction argument 
\begin{align*}
V_{n+1\sigma}(x,s) &= T_{f_0}V_{n\vec{\sigma}}(x,s) \\
              &= \int V_{n\vec{\sigma}}(x',s-c) \mathbb{Q}( dx' | x,f_0(x,s) ) \\
              &= \int \mathbb{E}_{x'}^{\vec{\sigma}} [ (C^n - (s-c))^+ ] \mathbb{Q}( dx' | x,f_0(x,s) )  \\
              &= \int \mathbb{E}_{x'}^{\vec{\sigma}} [ (c + C^n - s)^+ ] \mathbb{Q}( dx' | x,f_0(x,s) )  \\
              &= \mathbb{E}_x^\sigma [ C^{n+1} - s ]= w_{n+1\sigma}(x,s)
\end{align*}
We note that the history of the Markov Decision Process $\tilde{h}_n = (x_0,s_0,a_0,x_1,s_1,a_1,...,x_n,s_n)$ contains history $h_n = (x_0,a_0,x_1,a_1,...,x_n)$. We denote by $\tilde{\Pi}$ the history dependent policies of the Markov Decision Process. By (\cite{key-5}, Theorem 2.2.3), we get 
\begin{equation*}
\inf_{\sigma \in \Pi^M} V_{n\sigma}(x,s) = \inf_{\tilde{\pi} \in \tilde{\Pi}} V_{n\tilde{\pi}}(x,s).
\end{equation*}
Hence, we obtain 
\begin{equation*}
\inf_{\sigma \in \Pi^M} w_{n\sigma} \geq \inf_{\pi \in \Pi} w_{n\pi} \geq \inf_{\tilde{\pi} \in \tilde{\Pi}} = \inf_{\sigma \in \Pi^M}V_{n\sigma} = \inf_{\sigma \in \Pi^M} w_{n\sigma}
\end{equation*}
We conclude the proof.
\end{proof}
\begin{Theorem} \cite{key-4} \label{bauerle_finite}
Under the conditions of the Assumptions \ref{assumptions}, there exists an optimal Markov policy, in the sense introduced above, $\sigma^* \in \Pi$ for any finite horizon $N \in \mathbb{N}_0$ with 
\begin{equation}
\inf_{\pi \in \Pi} \mathbb{E}_x^\pi [(C^N - s)^+] = \mathbb{E}_x^{\sigma^*} [(C^N - s)^+] 
\end{equation}
\end{Theorem}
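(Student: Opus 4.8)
The plan is to prove existence of optimal Markov policies by backward induction on the value functions $V_n$ in the augmented state space $\tilde{X} = X \times \mathbb{R}$, showing at each stage that the minimal cost operator $T$ maps the cone of nonnegative lower semicontinuous functions into itself and that its infimum is attained by a measurable selector. Once a measurable minimizer $f_n^*$ of $V_n$ is produced at every stage $n = 0,1,\ldots,N$, the optimality of the Markov policy $\sigma^* = (f_0^*,\ldots,f_N^*)$ follows immediately from Theorem \ref{augment_info}, which already asserts that existence of minimizers at all stages implies optimality of the assembled Markov policy. Thus the entire content of the theorem reduces to the existence of these stagewise minimizers, and this is exactly where Assumption \ref{assumptions} enters.

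First I would verify the base case: the terminal cost $V_{-1}(x,s) = s^{-}$ belongs to $\mathbb{M}(\tilde{X})$ and is nonnegative and continuous, hence lower semicontinuous. For the inductive step I would assume that $V_n \in \mathbb{M}(\tilde{X})$ is nonnegative and lower semicontinuous, and analyze the map $(x,s,a) \mapsto L V_n(x,s,a) = \int V_n(x',s-c_n(x,a))\,\mathbb{Q}(dx'\mid x,a)$. Since $V_n \geq 0$ is lower semicontinuous and the system function $F_n$ depends continuously on $(x,a)$ (continuity part of Assumption \ref{assumptions}), the integrand $V_n(x',s-c)$ is a nonnegative lower semicontinuous function of the relevant variables, so the lower semicontinuity of $L V_n$ follows directly from the integral-preservation hypothesis of Assumption \ref{assumptions}.

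Next I would invoke a measurable selection theorem for the minimization $V_{n+1}(x,s) = \inf_{a \in A_n(x)} L V_n(x,s,a)$. The ingredients are the compactness of the admissible sets $A_n(x)$, which guarantees that the infimum of the lower semicontinuous map $a \mapsto L V_n(x,s,a)$ over $A_n(x)$ is attained; the upper semicontinuity of the multifunction $x \mapsto A_n(x)$, which together with the lower semicontinuity of $L V_n$ ensures that $V_{n+1}$ is again lower semicontinuous; and the inf-compactness and lower semicontinuity of the cost structure, all supplied by Assumption \ref{assumptions}. The standard selection result used in \cite{key-4} and \cite{key-5} then yields a measurable minimizer $f_{n+1}^*\colon \tilde{X} \to A$ with $f_{n+1}^*(x,s) \in A_n(x)$ and $V_{n+1} = T_{f_{n+1}^*} V_n$, and shows $V_{n+1} \geq 0$ is lower semicontinuous, closing the induction. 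Finiteness of the minimal values $V_n$ is ensured by the existence of a policy with $V_0(x,\pi) < M$ (Assumption \ref{assumptions}), so the recursion stays within $\mathbb{M}(\tilde{X})$.

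The hard part will be the simultaneous verification that $T$ preserves lower semicontinuity \emph{and} admits a measurable minimizer, precisely because the cost $c_n(x,a)$ enters inside the argument of $V_n$ through $s - c_n(x,a)$ rather than additively outside the integral. This is the step that forces all of the regularity hypotheses to be used at once: continuity of $F_n$, lower semicontinuity and inf-compactness of $c_n$, and compactness together with upper semicontinuity of $A_n(\cdot)$, which jointly place the problem in the scope of the measurable selection theorem. Once this stagewise existence is established, no further estimation is required and the claimed identity $\inf_{\pi \in \Pi} \mathbb{E}_x^\pi[(C^N - s)^+] = \mathbb{E}_x^{\sigma^*}[(C^N - s)^+]$ follows by applying Theorem \ref{augment_info} to $\sigma^* = (f_0^*,\ldots,f_N^*)$.
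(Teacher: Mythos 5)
Your proposal is sound, but note that the paper itself offers no proof of Theorem \ref{bauerle_finite}: the statement is imported verbatim from \cite{key-4} (B\"auerle--Ott), and the citation is the entire argument. What you have written is essentially a reconstruction of the proof in that reference, and it fits the machinery the paper does set up --- the augmented state space $\tilde{X}=X\times\mathbb{R}$, the operators $L$, $T_f$, $T$, the reduction $w_n = V_n$ of Theorem \ref{augment_info}, and the measurable selection result recorded later as Lemma \ref{critical_lemma0} (Rieder). Your backward induction (terminal function $s^-$ is nonnegative and l.s.c.; Assumption \ref{assumptions}(2) pushes lower semicontinuity through $L$; compactness of $A_n(x)$ plus upper semicontinuity of $x\mapsto A_n(x)$ plus inf-compactness yield attainment, a measurable selector $f_n^*$, and lower semicontinuity of $V_{n+1}$; Theorem \ref{augment_info} converts stagewise minimizers into an optimal Markov policy) is exactly the intended route. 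Two small remarks: first, once you invoke Assumption \ref{assumptions}(2) the separate appeal to continuity of $F_n$ is redundant for the lower semicontinuity of $LV_n$ --- the assumption already postulates the composite property; second, the step where $V_n(x',s-c_n(x,a))$ is claimed l.s.c. in $(x,a)$ silently uses that $s\mapsto V_n(x',s)$ is nonincreasing (true here, since $V_n(x,s)=\inf_\pi\mathbb{E}[(C^n-s)^+]$), because $c_n$ is only l.s.c. rather than continuous; the paper hides this inside Assumption \ref{assumptions}(2), so it is not a gap relative to the paper's framework, but it is worth stating if you want the induction to be self-contained.
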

Now we are ready to state our main result.
\begin{Theorem} Under Assumptions \ref{assumptions}, there exists an optimal Markov policy $\pi^*$ for the infinite horizon problem (\ref{main_problem}).
\end{Theorem}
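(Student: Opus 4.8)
The plan is to reduce the infinite-horizon $\mathrm{AVaR}$ problem to the family of inner problems $W_\infty(x,s):=\inf_{\pi\in\Pi}\mathbb{E}_x^\pi[(C^\infty-s)^+]$ indexed by the global variable $s$, and then to optimize over $s$ in the outer layer, exactly as in the three-line rewriting that precedes Assumption \ref{assumptions}. For each fixed $s$ I would approximate $W_\infty(\cdot,s)$ by the finite-horizon values $V_N(\cdot,s)=w_N(\cdot,s)$, whose optimal Markov policies exist by Theorem \ref{bauerle_finite}. The key structural fact is that the costs $c_n$ are nonnegative, so the partial sums satisfy $C^N\uparrow C^\infty$; since $y\mapsto(y-s)^+$ is nondecreasing and continuous, $(C^N-s)^+\uparrow(C^\infty-s)^+$ pointwise, and monotone convergence gives $w_{N\pi}(x,s)\uparrow\mathbb{E}_x^\pi[(C^\infty-s)^+]$ for every $\pi$. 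Taking infima, the finite-horizon value functions increase, $V_N\uparrow V_\infty:=\sup_N V_N$, and I would first record the easy inequality $V_\infty\le W_\infty$ coming from $\sup_N\inf_\pi\le\inf_\pi\sup_N$.

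The technical heart is to show that the limit $V_\infty$ is lower semicontinuous and solves the optimality equation $V_\infty=TV_\infty$. Each $V_N$ is l.s.c.: this follows by induction from $V_{-1}(x,s)=s^-$ using Assumptions \ref{assumptions}(2)--(3), since $T$ maps nonnegative l.s.c. functions to l.s.c. functions (the integral $Lv(x,s,a)$ is l.s.c. in $(x,a)$ by (2), and the infimum of an l.s.c. function over the compact set $A_n(x)$ from (3) is again l.s.c.). An increasing limit of l.s.c. functions is l.s.c., so $V_\infty$ is l.s.c. For the fixed-point equation I would pass to the limit in $V_{N+1}=TV_N$. Monotone convergence yields $LV_N(x,s,a)\uparrow LV_\infty(x,s,a)$ for each admissible $a$, and the nontrivial point is to interchange this increasing limit with the infimum over $a$. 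I expect to do this with a Dini-type argument: picking minimizers $a_N\in A_n(x)$ of the l.s.c. maps $a\mapsto LV_N(x,s,a)$ on the compact set $A_n(x)$, extracting a convergent subsequence $a_{N_k}\to a^*$, and using monotonicity together with lower semicontinuity to obtain $\liminf_k LV_{N_k}(x,s,a_{N_k})\ge LV_\infty(x,s,a^*)\ge TV_\infty(x,s)$; since the left-hand side equals $\lim_N V_{N+1}(x,s)=V_\infty(x,s)$, and the reverse inequality $V_\infty\le TV_\infty$ is immediate from $V_N\le V_\infty$ and monotonicity of $T$, this gives $V_\infty=TV_\infty$.

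Next I would produce a stationary optimal policy for the inner problem. Because $a\mapsto LV_\infty(x,s,a)$ is l.s.c. on the compact set $A_n(x)$, the infimum defining $TV_\infty$ is attained, and a measurable selection theorem (as used for Theorem \ref{bauerle_finite}) furnishes a measurable minimizer $f^*(x,s)$ with $V_\infty=T_{f^*}V_\infty$. Iterating this identity gives $V_\infty(x,s)=\mathbb{E}_x^{\sigma^*}[V_\infty(x_n,s_n)]$ for the stationary policy $\sigma^*=(f^*,f^*,\dots)$, where $s_n=s-C^{n-1}$. To conclude $\mathbb{E}_x^{\sigma^*}[(C^\infty-s)^+]=V_\infty(x,s)$, hence $W_\infty=V_\infty$ and $\sigma^*$ is optimal, I would compare $V_\infty(x_n,s_n)$ with $(C^\infty-s)^+$ through the monotone structure of the truncated costs and let $n\to\infty$, invoking the uniform bound in Assumption \ref{assumptions}(6) to control the tail; combined with the chain $V_\infty\le W_\infty\le\mathbb{E}_x^{\sigma^*}[(C^\infty-s)^+]$ this forces equality throughout.

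Finally I would treat the outer minimization. The map $G(x,s):=s+\frac{1}{1-\alpha}V_\infty(x,s)$ is l.s.c. in $s$, and it is coercive: as $s\to+\infty$ one has $V_\infty(x,s)\to0$, so $G\to+\infty$, while as $s\to-\infty$ the term $(C^\infty-s)^+$ forces $V_\infty(x,s)$ to grow like $-s$, and since $\tfrac{1}{1-\alpha}>1$ the coefficient of $s$ in $G$ is negative, so again $G\to+\infty$. Thus the minimum over $s$ is attained at some $s^*=s^*(x)$, which by Lemma \ref{lemma_static_avar_repre:} is the associated Value-at-Risk. The optimal Markov policy $\pi^*$ for (\ref{main_problem}) is then $\sigma^*$ run with the global variable initialized at $s^*$ and updated by $s_{n+1}=s_n-c_n$. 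The main obstacle is the interchange of the increasing limit with the infimum over actions in the second paragraph, together with the tail estimate in the third paragraph showing that the stationary minimizer actually attains the infinite-horizon value rather than merely bounding it from below; both rest essentially on the inf-compactness and lower-semicontinuity hypotheses of Assumption \ref{assumptions} and on the nonnegativity of the costs.
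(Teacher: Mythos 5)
Your proposal is correct, and its core---approximating the inner problem $W_\infty(x,s)=\inf_{\pi}\mathbb{E}_x^\pi[(C^\infty-s)^+]$ by the finite-horizon values $V_N$ of Theorem \ref{bauerle_finite}, using nonnegativity of the costs to get a monotone limit, preserving lower semicontinuity, and extracting a stationary minimizer on the augmented state space by measurable selection---is the same value-iteration route the paper takes, which delegates exactly these steps to Lemma \ref{critical_lemma} and to Theorem 4.2.3 of \cite{key-19}. You differ in three ways, all of which make your argument more self-contained. First, the paper closes the gap between $\lim_N V_N$ and the infinite-horizon value by the squeeze $w_n \le w_\infty \le w_n + M(n)$ built from the policy of Assumption \ref{assumptions}(6), whereas you close it by verification: iterating $V_\infty = T_{f^*}V_\infty$ and using $V_\infty \ge V_{-1}$, i.e. $V_\infty(x_n,s_n)\ge (C^{n-1}-s)^+$, forces $V_\infty = W_\infty$ after monotone convergence; this works and in fact needs no tail bound from Assumption \ref{assumptions}(6) at all, so your hedge about ``invoking the uniform bound to control the tail'' can be simplified to the chain $V_\infty(x,s)\ge \mathbb{E}_x^{\sigma^*}[(C^{n-1}-s)^+]\uparrow \mathbb{E}_x^{\sigma^*}[(C^\infty-s)^+]\ge W_\infty(x,s)\ge V_\infty(x,s)$. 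Second, you prove the fixed-point equation $V_\infty=TV_\infty$ by the explicit Dini/compactness interchange (minimizers $a_N$, subsequence, lower semicontinuity); the paper simply cites this as Lemma \ref{critical_lemma}. Third---and this is a genuine addition---you treat the outer minimization over $s$, showing by coercivity ($V_\infty(x,s)\to 0$ as $s\to+\infty$ via Assumption \ref{assumptions}(6) and dominated convergence, and $G(x,s)\ge -\alpha s/(1-\alpha)\to+\infty$ as $s\to-\infty$) that the infimum over $s$ is attained. The paper never does this: its proof concerns only the inner problem for fixed $s$, and its subsequent Remark concedes that $s$ is then fixed heuristically as a Value-at-Risk under a reference policy. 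Since the theorem as stated asserts optimality for the full problem (\ref{main_problem}), your outer-layer argument repairs what is arguably a gap in the paper's own proof; the price is that you must verify lower semicontinuity of $V_\infty$ in the variable $s$ as well as in $x$, which your induction from $V_{-1}(x,s)=s^-$ through Assumptions \ref{assumptions}(2), (3) and (5) indeed provides.
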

\begin{proof}
For the policy $\pi \in \Pi$ stated in the Assumption \ref{assumptions}, we have 
\begin{align}
w_{\infty,\pi} &= \mathbb{E}_x^\pi[ (C^\infty -s)^+ ]
\nonumber \\
&= \mathbb{E}_x^\pi[ (C^n + \sum_{k=n+1}^\infty C_k -s)^+ ]
\nonumber \\
&\leq E_x^\pi[ (C^n -s)^+] +E_x^\pi[ \sum_{k=n+1}^\infty C_k],
\nonumber \\
&\leq E_x^\pi[ (C^n -s)^+] +M(n),
\end{align}
where $M(n) \rightarrow 0$ as $n \rightarrow \infty$ due to the Assumption \ref{assumptions}. Taking the infimum over all $\pi \in \Pi$ we get 
\begin{equation}
w_{\infty}(x,s) \leq w_n + M(n)
\end{equation}
Hence we get 
\begin{equation}
w_n \leq w_{\infty}(x,s) \leq w_n + M(n)
\end{equation}
Letting $n \rightarrow \infty$, we get 
\begin{equation}
\lim_{n \rightarrow \infty} w_n = w_\infty
\end{equation}
Moreover, by Theorem \ref{bauerle_finite}, there exists $\pi^* =\{ f_n\}_{n=0}^N \in \Pi$ such that $V_\pi^N(x) = V^*_{0,N}(x)$ and we also have by the assumption that $V_\pi^N(x)$ is l.s.c. By the nonnegativity of the cost functions $c_n \geq 0$, we have that $N \rightarrow V^*_{0,N}(x)$ is nondecreasing and $V^*_{0,N}(x) \leq V^*_{0,\infty}(x)$ for all $x \in \mathbf{X}$. Denote 
\begin{equation}
u (x) := \sup_{N>0}V^*_{0,N}(x).
\end{equation}
Then $u(x)$ being the supremum of l.s.c. functions is l.s.c. as well. Letting $N\rightarrow \infty$, we have $u(x) \leq V^*_{0,\infty}(x)$. Hence $V^*_{0,\infty}(x)$ is l.s.c. as well. We state that the optimal policies are stationary via an induction argument as in Theorem \ref{bauerle_finite} and by Theorem 4.2.3 in \cite{key-19}, and hence conclude the proof. 
\end{proof}
\begin{Remark}
We recall that our optimization problem is 
\begin{equation}
\inf_{\pi \in \Pi}\mathrm{AVaR}_{\alpha}^\pi(\sum_{n=0}^{\infty} c(x_n,a_n))\label{eq:finite},
\end{equation}
which is equivalent to 
\begin{equation}
\inf_{\pi \in \Pi}\mathrm{AVaR}_{\alpha}^\pi(\sum_{n=0}^{\infty}c(x_n,a_n)) = \inf_{s\in\mathbb{R}}\left\{ s+\frac{1}{1-\alpha}\inf_{\pi\in\Pi}\mathbb{E}_{x}^{\pi}[(C^{\infty}-s)^{+}]\right\} 
\end{equation}
Hence, we fix the global variable a priori $s$ as 
\begin{equation}
s = \mathrm{VaR}^{\pi_0}_{\alpha}(C^\infty), 
\end{equation}
where $\mathrm{VaR}^{\pi_0}_{\alpha}(C^\infty)$ is decided using the reference probability measure $\mathbb{P}_0$. 
It is claimed in \cite{key-4} that by fixing global variable $s$, the resulting optimization problem would turn out to be over $\mathrm{AVaR}_{\beta}(C^\infty)$, where possibly $\alpha \neq \beta$, under some regularity assumptions. But, it is not clear to us, what these regularity conditions would be for that to hold and why it should be necessarily case. Since for each fixed $s$, the inner optimization problem in Equation \ref{eq:finite} has an optimal policy $\pi(s)$ depending on $s$. Hence, as in \cite{key-4}, we focus on the inner optimization problem but by fixing the global variable $s$ heuristically a priori $\mathrm{VaR}^{\pi_0}_{\alpha}(C^N)$ with respect to reference probability measure $P$ and then solve the optimization problem \textit{for each path} $\omega$ conditionally with respect to filtration $\mathcal{F}_n$ at each time $n \in \mathbb{N}_0$ namely by taking into account whether for that path $s_n \leq 0$ or $s_n > 0$. Hence, by denoting $s_n = C^n -s$, the optimization problem reduces to classical risk neutral optimization problem for that path $\omega$ whenever $s_n \leq 0$. We treat this classical case (see \cite{key-19}) in the subsection below.
\end{Remark}
\section{Solving the case when the global variable $s_n \leq 0$}
Recall that the inner optimization problem is 
\begin{align} 
V^*_0(x) &= \frac{1}{1-\alpha}\inf_{\pi \in \Pi} \mathbb{E}_x^\pi[ (C^{\infty} -s)^+].
\nonumber \\ \quad
&= \frac{1}{1-\alpha}\inf_{\pi \in \Pi} \mathbb{E}_x^\pi \bigg[ \big(\sum_{n=N+1}^\infty  c(x_n,a_n) - (s -C^N) \big)^+  \bigg]
\nonumber \\ \quad
&= \frac{1}{1-\alpha}\inf_{\pi \in \Pi} \mathbb{E}_x^\pi \bigg[ \big(\sum_{n=N+1}^\infty  c(x_n,a_n) - s_n \big)^+  \bigg] \\ \quad
&= \frac{1}{1-\alpha}\inf_{\pi \in \Pi} \mathbb{E}_x^\pi \bigg[ \mathbb{E}_x^\pi \bigg[ \big(\sum_{n=N+1}^\infty  c(x_n,a_n) - s_n \big)^+ |\mathcal{F}_n \bigg] \bigg] \\ \quad
&= \frac{1}{1-\alpha}\inf_{\pi \in \Pi} \mathbb{E}_x^\pi \bigg[ \mathbb{E}_x^\pi \bigg[ \big(\sum_{n=N+1}^\infty  c(x_n,a_n) - s_n \big)^+ |\{x_n,s_n\} \bigg] \bigg]
\end{align}
Hence, whenever $s_n \leq 0$, we have a risk neutral optimization problem in that path $\omega$. Namely, 
\begin{equation}
V_{n+1}^\pi(x) :=  \sum_{i=n+1}^{\infty} \frac{1}{1-\alpha}c_i(x_i,\pi_i) - \frac{1}{1-\alpha}s_n.
\end{equation}
Without loss of generality, with some abuse of notation, we take 
\begin{equation}
\frac{1}{1-\alpha}c_i(x_i,\pi_i) - \frac{1}{1-\alpha}s = c_i(x_i,\pi_i),
\end{equation}
for all $i \in \mathbb{N}_0$. That is to say $V_n^\pi(x_n)$ is the total cost from time $n$ onwards for that particular path $\omega$, where $n = \min \{ m \in \mathbb{N}_0 : s_m \leq 0 \}$ given the initial condition $x_n$. The corresponding minimal cost is then 
\begin{equation}
\label{optim_eqn}
V_n^*(x_n) := \inf_{\pi \in \Pi} V_n^{\pi}(x_n),
\end{equation}
We also denote that for any two integers $N > n \geq 0$
\begin{equation}
V_n^\pi(x) = V_{n,N}^\pi(x) + V_{N,\infty}^\pi(x),
\end{equation}
where 
\begin{equation}
V_{n,N}^\pi(x) := \sum_{i = n}^{N-1}c_i(x_i,a_i)
\end{equation}
is the $(N-n)$-step cost when using the policy $\pi$, starting at $x_n$ and 
\begin{equation}
V_{N,\infty}^\pi(x) := \sum_{i = N}^\infty c_i(x_i, a_i)
\end{equation}
is the tail cost from time $N$ onwards. Let 
\begin{equation}
V_{n,N}^*(x_n) := \inf_{ \pi \in \Pi } V_{n,N}^\pi(x_n)
\end{equation}
We need the following two technical lemmas. 
\begin{Lemma}
\label{critical_lemma0}
Fix an arbitrary $n \in \mathbb{N}_0$. Let $\mathbb{K}_n$ be as in assumptions, and let $u: \mathbb{K}_n \rightarrow \mathbb{R}$ be a given measurable function. Define 
\begin{equation}
u^*(x) := \inf_{a \in A_n(x)}u(x,a), \textrm{ for all } x \in X_n.
\end{equation}
\begin{itemize}
\item If $u$ is nonnegative, l.s.c. and inf-compact on $\mathbb{K}_n$, then there exists $\pi_n \in \mathbb{F}_n$ such that 
\begin{equation}
u^*(x) = u(x,\pi_n), \textrm{ for all } x \in X
\end{equation}
and $u^*$ is measurable.
\item If in addition the multifunction $x \rightarrow A_n(x)$ satisfies the Assumption \ref{assumptions}, then $u^*$ is l.s.c.
\end{itemize}
\end{Lemma}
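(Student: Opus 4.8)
The plan is to establish the three assertions in sequence: pointwise attainment of the infimum, measurability of $u^\ast$ together with a measurable minimizer, and finally lower semicontinuity of $u^\ast$ under the extra upper-semicontinuity hypothesis. Throughout I would reuse the compactness that the assumptions build into the problem.

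First, for attainment, fix $x \in X_n$ and observe that $a \mapsto u(x,a)$ is lower semicontinuous on the section $A_n(x)$, being a restriction of the jointly l.s.c.\ function $u$. The value $u^\ast(x)$ is finite, since $u \geq 0$ gives a lower bound and any admissible $a_0 \in A_n(x)$ gives $u^\ast(x) \leq u(x,a_0) < \infty$. Taking a minimizing sequence $a_k$ with $u(x,a_k)\to u^\ast(x)$, the points eventually lie in the sublevel set $\{a \in A_n(x) : u(x,a) \leq u^\ast(x)+1\}$, which is compact by inf-compactness; extracting a limit $a_{k_j}\to a^\ast$ and applying lower semicontinuity yields $u(x,a^\ast)\leq \liminf_j u(x,a_{k_j}) = u^\ast(x)$, so the infimum is attained. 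In particular the minimizer multifunction
\[
\Phi(x) := \{\, a \in A_n(x) : u(x,a) = u^\ast(x)\,\}
\]
is nonempty, and it is compact-valued, being the intersection of $A_n(x)$ with a compact sublevel set.

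Next I would derive measurability of $u^\ast$ and the existence of the selector. Using attainment, the lower level sets satisfy $\{x : u^\ast(x)\leq r\} = \mathrm{proj}_X\{(x,a)\in\mathbb{K}_n : u(x,a)\leq r\}$; the set on the right is closed in $\mathbb{K}_n$ by l.s.c.\ of $u$ and has compact $x$-sections by inf-compactness, and projections of such sets are measurable, so $u^\ast$ is measurable. The multifunction $\Phi$ is then measurable and compact-valued, and a Kuratowski--Ryll-Nardzewski type selection theorem (in the form used in \cite{key-19}) produces a measurable $\pi_n \in \mathbb{F}_n$ with $u^\ast(x)=u(x,\pi_n(x))$ for all $x$. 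I expect this measurable-selection step to be the principal obstacle: the attainment and the semicontinuity arguments are soft compactness, but here one must carefully verify measurability of $\Phi$ from the Borel structure of $\mathbb{K}_n$ and the l.s.c.\ plus inf-compactness of $u$, and I would rely on the established measurable-selection machinery rather than reprove it.

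Finally, for lower semicontinuity of $u^\ast$ under the additional assumption that $x\mapsto A_n(x)$ is u.s.c., I would argue sequentially. Fix $x_k \to x$ and, passing to a subsequence, assume $u^\ast(x_k)\to L := \liminf_k u^\ast(x_k)$, the case $L=+\infty$ being trivial. Choose minimizers $a_k\in A_n(x_k)$ with $u(x_k,a_k)=u^\ast(x_k)$ via the attainment already proved. Since $\{x_k\}\cup\{x\}$ is compact and $A_n$ is u.s.c.\ with compact values, the image $\bigcup_k A_n(x_k)$ has compact closure, so $(a_k)$ has a convergent subsequence $a_{k_j}\to a^\ast$; the closed-graph form of upper semicontinuity forces $a^\ast\in A_n(x)$, and lower semicontinuity of $u$ gives $u^\ast(x)\leq u(x,a^\ast)\leq \liminf_j u(x_{k_j},a_{k_j}) = L$. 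Thus $u^\ast(x)\leq \liminf_k u^\ast(x_k)$, which is precisely lower semicontinuity of $u^\ast$, completing the argument.
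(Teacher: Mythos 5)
The paper does not actually prove this lemma: its entire proof is the citation ``See \cite{key-35}'' (Rieder's measurable selection theorems), so any self-contained argument is necessarily a different route from the paper's. What you have written is, in substance, the standard argument that underlies that reference and the corresponding appendix results in \cite{key-19}: attainment of the infimum from inf-compactness plus lower semicontinuity, measurability of $u^*$ via the sublevel-set/projection identity, a measurable selector for the compact-valued argmin multifunction $\Phi$, and a Berge-type sequential argument for lower semicontinuity of $u^*$. All four steps are sound, and you correctly flag the measurable-selection step as the one that cannot be handled by soft compactness alone --- that the projection of a Borel set with compact sections is Borel, and that $\Phi$ admits a measurable selector, is precisely the content of the cited selection theorems, so deferring to that machinery is appropriate; what your write-up buys over the paper's bare citation is that it makes visible exactly which hypotheses feed which conclusion. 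One caveat on the final step: to extract a convergent subsequence of minimizers $a_k \in A_n(x_k)$ you invoke the fact that a u.s.c.\ compact-valued multifunction maps compact sets to relatively compact sets. That is correct for upper semicontinuity in the usual topological sense, but Assumption \ref{assumptions}(5) literally defines u.s.c.\ as only the sequential closed-graph property, which is strictly weaker and does not by itself provide the needed relative compactness (a closed-graph, compact-valued $A_n$ can let minimizers escape to infinity along $x_k \to x$, destroying lower semicontinuity of $u^*$). Your proof is therefore correct under the standard reading of u.s.c.; the imprecision lies in the paper's formulation of the hypothesis rather than in your argument, but it is worth stating explicitly which notion of upper semicontinuity you are using.
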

\begin{proof}
See \cite{key-35}.
\end{proof}
\begin{Lemma}
\label{critical_lemma}
For every $N > n \geq 0$, let $w_n$ and $w_{n,N}$ be functions on $\mathbb{K}_n$, which are nonnegative,l.s.c. and inf-compact on $\mathbb{K}_n$. If $w_{n,N}\uparrow w_n$ as $N \rightarrow \infty$, then 
\begin{equation}
\lim_{N \rightarrow \infty } \min_{a \in A_n(x)} w_{n,N}(x,a) = \min_{a \in A_n(x)} w_n(x,a)
\end{equation}
for all $x \in X$. 
\end{Lemma}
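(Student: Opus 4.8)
The plan is to fix an arbitrary $x \in X$ and establish the two matching inequalities for the quantities $m_N(x) := \min_{a \in A_n(x)} w_{n,N}(x,a)$ and $m(x) := \min_{a \in A_n(x)} w_n(x,a)$; by Lemma \ref{critical_lemma0} each minimum is genuinely attained, so $m_N$ and $m$ are well-defined (and indeed l.s.c.). The easy direction is pure monotonicity: since $w_{n,N} \uparrow w_n$ pointwise we have $w_{n,N} \le w_{n,N+1} \le w_n$, and minimizing over $a$ preserves these inequalities, giving $m_N(x) \le m_{N+1}(x) \le m(x)$. Thus $(m_N(x))_N$ is nondecreasing and bounded above by $m(x)$, so the limit $L := \lim_{N\to\infty} m_N(x)$ exists and satisfies $L \le m(x)$.

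For the reverse inequality $L \ge m(x)$ I would first trap the minimizers in a single compact set. For each $N$ choose, via Lemma \ref{critical_lemma0}, a minimizer $a_N \in A_n(x)$ with $m_N(x) = w_{n,N}(x,a_N)$. Writing $N_0 = n+1$ for the smallest admissible index, monotonicity in $N$ gives $w_{n,N_0}(x,a_N) \le w_{n,N}(x,a_N) = m_N(x) \le m(x)$, so every $a_N$ lies in the set $\{a \in A_n(x) : w_{n,N_0}(x,a) \le m(x)\}$, which is compact by the inf-compactness of $w_{n,N_0}$ (alternatively one may invoke the compactness of $A_n(x)$ from Assumption \ref{assumptions}). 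Hence $(a_N)$ has a subsequence $a_{N_k} \to a^*$ with $a^* \in A_n(x)$.

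The crux is a two-stage passage to the limit. Fix an arbitrary index $M \ge N_0$. For all $k$ large enough that $N_k \ge M$, monotonicity yields $w_{n,M}(x,a_{N_k}) \le w_{n,N_k}(x,a_{N_k}) = m_{N_k}(x) \le L$. Sending $k \to \infty$ and using lower semicontinuity of $w_{n,M}$ at $(x,a^*)$ gives $w_{n,M}(x,a^*) \le \liminf_{k} w_{n,M}(x,a_{N_k}) \le L$. Since this holds for every $M$, I then let $M \to \infty$ and use $w_{n,M}(x,a^*) \uparrow w_n(x,a^*)$ to obtain $w_n(x,a^*) \le L$, whence $m(x) \le w_n(x,a^*) \le L$. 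Together with $L \le m(x)$ this proves $\lim_{N\to\infty} m_N(x) = m(x)$.

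The main obstacle is the order in which the two limits are taken: one cannot pass to the limit in $w_{n,N_k}(x,a_{N_k})$ directly, since the function and its argument move simultaneously. Freezing the index at $M$, sending $k\to\infty$ first (so that only the argument varies and lower semicontinuity applies), and only afterwards sending $M\to\infty$ (so that only the function varies and monotone convergence applies) is precisely what legitimizes the interchange. The other point requiring care is securing a fixed compact set containing all the minimizers so that a convergent subsequence exists at all, and this is exactly where the inf-compactness hypothesis enters.
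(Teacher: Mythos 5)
Your proof is correct and complete. For comparison: the paper itself offers no argument for this lemma --- its entire proof is the citation ``See \cite{key-19}, page 47,'' i.e., it outsources the statement to Hern\'andez-Lerma and Lasserre, where it appears as a standard lemma on monotone approximation of minima used in value iteration. Your argument is essentially a reconstruction of that textbook proof: the trivial inequality $\lim_{N} m_N(x) \le m(x)$ by monotonicity; the trapping of the minimizers $a_N$ in the single compact sublevel set $\{a \in A_n(x) : w_{n,N_0}(x,a) \le m(x)\}$ furnished by inf-compactness (finiteness of $m(x)$ being automatic since $w_n$ is real-valued and $A_n(x)$ is nonempty); and the key two-stage limit --- freeze the index $M$, send $k \to \infty$ using lower semicontinuity of $w_{n,M}$, then send $M \to \infty$ using $w_{n,M} \uparrow w_n$ --- which is exactly what legitimizes interchanging $\lim_N$ with $\min_a$. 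So you have not taken a different route; you have supplied the details the paper delegates to its reference, and in a form slightly stronger than needed here, since your sublevel-set step works under the lemma's stated hypotheses alone, whereas in the paper's setting one could also fall back on the compactness of $A_n(x)$ from Assumption \ref{assumptions}, as you note.
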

\begin{proof}
See \cite{key-19} page 47.
\end{proof}
For $n \in \mathbb{N}_0$ we denote
\begin{align} 
V^*_{n,N}(x) &:= \inf_{\pi \in \Pi} \int \big( \sum_{i=n}^N c(x_n, a_n) -s  )^+\mathbb{Q}(dx'|x,f_0(x,s) \big)
\\
V^*_{n}(x) &:= \inf_{\pi \in \Pi} \int \big( \sum_{i=n}^\infty c(x_n, a_n) -s  )^+\mathbb{Q}(dx'|x,f_0(x,s) \big)
\end{align}
\begin{definition}
A sequence of functions $u_n: X_n \rightarrow \mathbb{R}$ is called a solution to the optimality equations if
\begin{equation}
u_n(x) = \inf_{a \in A_n(x)} \{ c_n(x,a) + \mathbb{E}[ u_{n+1}[F_n(x,a,\xi_n)] ] \},
\end{equation} where
\begin{equation}
\mathbb{E}[ u_{n+1}[F_n(x,a,\xi_n)] ] = \int_{S_n} u_{n+1}[F_n(x,a,s)]\mu_n(ds).
\end{equation}
\end{definition}
First, we introduce the following notations for simplicity. Let $L_n(X)$  be the family of l.s.c. non-negative functions on $X$. Moreover, denote 
\begin{equation}
\label{min_eqn}
P_nu(x) := \min_{a \in A_n(x)} \{ c_n(x,a)  + \mathbb{E}[u_{n+1}[F_n(x,a,\xi_n)]\},
\end{equation}
for all $x \in X$.

\begin{Lemma}
Using the Assumption \ref{assumptions}, then 
\begin{itemize}
\item $P_n$ maps $L_{n+1}(X)$ into $L_n(X)$.
\item For every $u \in L_{n+1}(X)$, there exists $a^*_n \in \mathbb{F}_n$ such that $a_n \in A_n(x)$ attains the minimum in (\ref{min_eqn}), i.e. 
\begin{equation}
\label{sm_eq}
P_nu(x) :=  \{ c_n(x,a_n)  + \mathbb{E}[u_{n+1}[F_n(x,a_n,\xi_n)]\},
\end{equation}
\end{itemize}
\end{Lemma}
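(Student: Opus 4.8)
The plan is to recognize $P_n$ as the one-step minimal-cost operator of a risk-neutral Markov decision model and to reduce both assertions to Lemma~\ref{critical_lemma0}. I would first write
\[
h(x,a) := c_n(x,a) + \int_{S_n} u_{n+1}\big[F_n(x,a,s)\big]\,\mu_n(ds), \qquad (x,a) \in \mathbb{K}_n,
\]
so that $P_n u(x) = \inf_{a \in A_n(x)} h(x,a)$. If I can show that $h$ is nonnegative, lower semicontinuous and inf-compact on $\mathbb{K}_n$, then both bullets follow at once: the first bullet of Lemma~\ref{critical_lemma0} produces a measurable selector $a_n^* \in \mathbb{F}_n$ that attains the minimum (so the $\inf$ is in fact a $\min$ and (\ref{sm_eq}) holds with $P_n u$ measurable), while its second bullet---invoked through the upper semicontinuity of the multifunction $x \mapsto A_n(x)$ in Assumption~\ref{assumptions}---yields that $P_n u$ is l.s.c. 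Finally, nonnegativity of $P_n u$ is inherited from $h \geq 0$, so $P_n u \in L_n(X)$.

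Nonnegativity of $h$ is immediate, since $c_n \geq 0$ by Assumption~\ref{assumptions} and $u_{n+1} \geq 0$ because $u_{n+1} \in L_{n+1}(X)$. For inf-compactness, I would use that the integral summand of $h$ is nonnegative, so
\[
\{a \in A_n(x): h(x,a) \leq r\} \subseteq \{a \in A_n(x): c_n(x,a) \leq r\},
\]
and the right-hand set is compact by the inf-compactness of $c_n$ in Assumption~\ref{assumptions}; once the l.s.c.\ of $h$ is in hand, the left-hand sublevel set is closed, hence a closed subset of a compact set and therefore compact. (Alternatively, the compactness of $A_n(x)$ from Assumption~\ref{assumptions} already forces every closed sublevel set to be compact, so either assumption suffices.)

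The main work, and the step I expect to be the main obstacle, is the lower semicontinuity of $h$. Since a finite sum of l.s.c.\ functions is l.s.c.\ and $c_n$ is l.s.c.\ by Assumption~\ref{assumptions}, it suffices to prove that
\[
(x,a) \longmapsto \int_{S_n} u_{n+1}\big[F_n(x,a,s)\big]\,\mu_n(ds)
\]
is l.s.c.\ on $\mathbb{K}_n$. I would fix $s \in S_n$ first: by Assumption~\ref{assumptions} the map $(x,a)\mapsto F_n(x,a,s)$ is continuous, and $u_{n+1}$ is l.s.c.\ and nonnegative, so the composition $(x,a)\mapsto u_{n+1}[F_n(x,a,s)]$ is l.s.c.\ and nonnegative for each fixed $s$. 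Then, taking $(x_k,a_k)\to(x,a)$ in $\mathbb{K}_n$, one has $\liminf_k u_{n+1}[F_n(x_k,a_k,s)] \geq u_{n+1}[F_n(x,a,s)]$ pointwise in $s$, and Fatou's lemma (applicable precisely because the integrands are nonnegative) gives
\[
\liminf_{k\to\infty}\int_{S_n} u_{n+1}[F_n(x_k,a_k,s)]\,\mu_n(ds) \geq \int_{S_n} u_{n+1}[F_n(x,a,s)]\,\mu_n(ds),
\]
which is exactly the desired lower semicontinuity. The delicate points here are the preservation of l.s.c.\ under composition with a continuous map and the nonnegativity needed to justify Fatou; both are guaranteed by the hypotheses. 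With $h$ shown to be nonnegative, l.s.c.\ and inf-compact, Lemma~\ref{critical_lemma0} applies verbatim and both bullets of the statement follow.
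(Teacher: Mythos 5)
Your proposal is correct and follows essentially the same route as the paper: both reduce the claim to Lemma~\ref{critical_lemma0} applied to the function $(x,a)\mapsto c_n(x,a)+\mathbb{E}[u_{n+1}[F_n(x,a,\xi_n)]]$. The only difference is that you explicitly verify the hypotheses (nonnegativity, l.s.c.\ via Fatou's lemma and the continuity of $F_n$, and inf-compactness), which the paper simply asserts ``by assumptions,'' so your write-up is a more detailed version of the same argument.
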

\begin{proof}
Let $u \in L_{n+1}(X)$. Then by assumptions we have that the function 
\begin{equation}
(x,a) \rightarrow c_n(x,a) + \mathbb{E}[u_{n+1}[F_n(x,a_n,\xi_n)]
\end{equation}
is non-negative and l.s.c. and by Lemma \ref{critical_lemma0}, there exists $\pi_n \in \mathbb{F}_n$ that satisfies Equation \ref{sm_eq} and $P_n u$ is l.s.c. So we conclude the proof.
\end{proof}
By dynamic programming principle, we express the  optimality equations 38 as 
\begin{equation}
V_m^* = P_mV^*_{m+1},
\end{equation}
for all $m \geq n$. We continue with the following lemma. 
\begin{Lemma} Using the Assumption \ref{assumptions}, consider a sequence $\{ u_m \}$ of functions $u_m \in L_m(X)$ for $m \in \mathbb{N}_0$, then the following is true. If $u_n \geq P_n u_{n+1}$ for all $m \geq n$, then $u_m \geq V_m^*$ for all $m \geq n$.
\end{Lemma}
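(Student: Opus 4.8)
The plan is to prove the standard ``verification'' fact that every supersolution of the optimality equation dominates the value function, by unfolding the one-step inequality $u_m \geq P_m u_{m+1}$ into a multi-step inequality along the policy generated by the pointwise minimizers, and then letting the horizon tend to infinity.

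First I would fix an arbitrary $m \geq n$ and exploit the existence of minimizers established in the previous lemma. Since each $u_{i+1} \in L_{i+1}(X)$, for every $i \geq n$ there is a decision rule $f_i^* \in \mathbb{F}_i$ attaining the minimum defining $P_i u_{i+1}$, so that the hypothesis gives the pointwise bound
\begin{equation*}
u_i(x) \geq P_i u_{i+1}(x) = c_i(x,f_i^*(x)) + \mathbb{E}\big[u_{i+1}(F_i(x,f_i^*(x),\xi_i))\big].
\end{equation*}
Let $\pi^* = (f_m^*, f_{m+1}^*, \ldots)$ be the Markov policy assembled from these minimizers.

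Next I would iterate this bound along $\pi^*$. Taking $\mathbb{E}_x^{\pi^*}$ of both sides at stage $m$, then substituting the analogous inequality for $u_{m+1}(x_{m+1})$, and using the tower property together with the Markov transition structure so that the nested single-step expectations collapse into $\mathbb{E}_x^{\pi^*}$, the one-step bounds telescope into
\begin{equation*}
u_m(x) \geq \mathbb{E}_x^{\pi^*}\Big[\sum_{i=m}^{N-1} c_i(x_i,a_i) + u_N(x_N)\Big] \geq \mathbb{E}_x^{\pi^*}\Big[\sum_{i=m}^{N-1} c_i(x_i,a_i)\Big] = V_{m,N}^{\pi^*}(x),
\end{equation*}
valid for every $N > m$, where the second inequality uses $u_N \geq 0$ (since $u_N \in L_N(X)$). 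Finally, because the costs are nonnegative the partial sums increase to the total cost, so by monotone convergence $V_{m,N}^{\pi^*}(x) \uparrow V_m^{\pi^*}(x)$ as $N \to \infty$. Passing to the limit yields $u_m(x) \geq V_m^{\pi^*}(x) \geq \inf_{\pi \in \Pi} V_m^{\pi}(x) = V_m^*(x)$, and since $m \geq n$ was arbitrary this gives $u_m \geq V_m^*$ for all $m \geq n$.

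The delicate point, and the step I would write most carefully, is the legitimacy of chaining the pointwise one-step inequality into the multi-step inequality: this rests on the measurability of the minimizers $f_i^*$ (guaranteed by the preceding lemma) and on the conditioning/tower structure of the controlled chain so that the iterated single-step expectations assemble cleanly into $\mathbb{E}_x^{\pi^*}$. Once that telescoping is justified, the nonnegativity of the costs makes the passage $N \to \infty$ a routine application of monotone convergence, and the comparison $V_m^{\pi^*} \geq V_m^*$ is immediate from the definition of the infimum.
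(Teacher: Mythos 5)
Your proof follows essentially the same route as the paper's: select measurable minimizers via the preceding lemma, iterate the resulting one-step inequality along the induced policy, drop the nonnegative terminal term $u_N$, and let $N \to \infty$ using nonnegativity of the costs. In fact your write-up is the more careful of the two, since you make the expectation operators, the tower-property telescoping, and the monotone-convergence passage explicit where the paper leaves them implicit.
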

\begin{proof}
By previous lemma, there exists a policy $\pi = \{\pi_m\}_{m \geq n}$ such that for all $m \geq n$
\begin{equation}
u_m(x) \geq c_m(x,\pi_m) + u_{m+1}(x_{m+1}^\pi).
\end{equation}
By iterating, we have
\begin{equation}
u_m(x) \geq \sum_{i = m}^{N-1}c_i(x_i^\pi, \pi_i) + u_{m+N}(x_{m+N}^\pi),
\end{equation}
Hence we have 
\begin{equation}
u_m(x) \geq V_{m,N}(x,\pi),
\end{equation}
for all $N > 0$. By letting $N \rightarrow \infty$, we have $u_m(x) \geq V_m(x,\pi)$ and so $u_m \geq V_m^*$. Hence, we conclude the proof.  
\end{proof}
\begin{Theorem}
\label{val_iter}
Suppose that assumptions hold, then for every $m \geq n$ and $x \in X$,
\begin{equation}
V_{n,N}^*(x) \uparrow V_n^*(x),
\end{equation}
as $N \rightarrow \infty$ and $V_n^*$ is l.s.c. 
\end{Theorem}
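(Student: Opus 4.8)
The plan is to establish the two claims—monotone convergence $V_{n,N}^*\uparrow V_n^*$ and lower semicontinuity of $V_n^*$—together, by combining the dynamic programming recursion with Lemma \ref{critical_lemma}. I would first dispose of the easy inequality. Since every cost $c_i$ is nonnegative, for each fixed $\pi\in\Pi$ the map $N\mapsto V_{n,N}^\pi(x)=\sum_{i=n}^{N-1}c_i(x_i,a_i)$ is nondecreasing and bounded above by $V_n^\pi(x)$, because the tail $V_{N,\infty}^\pi(x)$ is nonnegative. Taking the infimum over $\pi$ preserves both properties, so $N\mapsto V_{n,N}^*(x)$ is nondecreasing with $V_{n,N}^*(x)\le V_n^*(x)$ for all $N$. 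Hence the limit $u_n(x):=\sup_N V_{n,N}^*(x)$ exists in $[0,\infty]$ and satisfies $u_n\le V_n^*$; it remains to prove the reverse inequality $u_n\ge V_n^*$.

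Next I would exploit the finite-horizon recursion. The one-step decomposition $V_{n,N}^\pi(x)=c_n(x,a_n)+V_{n+1,N}^{\vec{\pi}}(x_{n+1})$ gives, after taking infima, $V_{n,N}^*=P_nV_{n+1,N}^*$. Starting from the base case $V_{N,N}^*\equiv 0\in L_N(X)$ and using that $P_n$ maps $L_{n+1}(X)$ into $L_n(X)$ (the lemma proved above), a downward induction on the starting index shows $V_{n,N}^*\in L_n(X)$ for every $N$, so each $V_{n,N}^*$ is nonnegative and l.s.c. Consequently $u_n=\sup_N V_{n,N}^*$ is l.s.c. as a supremum of l.s.c. functions; this will yield the lower semicontinuity of $V_n^*$ once the identification $u_n=V_n^*$ is established.

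The crux is to interchange the limit in $N$ with the minimization over $a$. Set $w_{n,N}(x,a):=c_n(x,a)+\mathbb{E}[V_{n+1,N}^*(F_n(x,a,\xi_n))]$ and $w_n(x,a):=c_n(x,a)+\mathbb{E}[u_{n+1}(F_n(x,a,\xi_n))]$. Since $V_{n+1,N}^*\uparrow u_{n+1}$ with nonnegative terms, the monotone convergence theorem gives $w_{n,N}\uparrow w_n$ pointwise, and each $w_{n,N}$ inherits nonnegativity, lower semicontinuity (via Assumption \ref{assumptions}) and inf-compactness from $c_n$. Lemma \ref{critical_lemma} then applies and yields
\[
u_n(x)=\lim_{N\to\infty}\min_{a\in A_n(x)}w_{n,N}(x,a)=\min_{a\in A_n(x)}w_n(x,a)=P_nu_{n+1}(x).
\]
Thus $u$ solves the optimality equation $u_m=P_mu_{m+1}$ for all $m\ge n$; in particular $u_m\ge P_mu_{m+1}$, so the verification lemma above gives $u_m\ge V_m^*$. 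Together with $u_m\le V_m^*$ from the first step, this proves $u_m=V_m^*$, that is $V_{m,N}^*\uparrow V_m^*$, and the lower semicontinuity of $u_m$ transfers to $V_m^*$.

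I expect the main obstacle to be the verification of the hypotheses of Lemma \ref{critical_lemma} for the auxiliary functions $w_{n,N}$: one must check that inf-compactness of $c_n$ is inherited by each $w_{n,N}$ (so that the level sets $\{a:w_{n,N}(x,a)\le r\}$ are compact, being closed subsets of the compact sets $\{a:c_n(x,a)\le r\}$) and that the monotone convergence $V_{n+1,N}^*\uparrow u_{n+1}$ passes through the integral against the kernel $\mathbb{Q}$. Once these regularity properties are secured, the interchange of limit and minimum, and hence the identification $u_n=P_nu_{n+1}$, follows directly.
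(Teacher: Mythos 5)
Your proof is correct and follows the same overall strategy as the paper's: establish the finite-horizon dynamic programming recursion $V_{n,N}^* = P_n V_{n+1,N}^*$, pass to the monotone limit $u_n = \sup_N V_{n,N}^*$ (l.s.c.\ as a supremum of l.s.c.\ functions), and use Lemma \ref{critical_lemma} to interchange the limit in $N$ with the minimization over $a \in A_n(x)$. The one place where you genuinely diverge is the closing identification $u_n = V_n^*$, and there your version is tighter than the paper's. The paper concludes by asserting that nonnegativity of the costs gives $u \le V_n^*$ and that ``by definition'' $V_n^* \le u$; but both of these easy observations in fact point in the same direction (an infimum of limits dominates the limit of infima), so $V_n^* \le u$ is precisely the nontrivial inequality and is not ``by definition.'' You close this gap correctly: the limit interchange yields the optimality equation $u_n = P_n u_{n+1}$ for the limit function itself (not, as the paper writes it, directly for $V_n^*$), and then the verification lemma proved just before the theorem --- if $u_m \ge P_m u_{m+1}$ for all $m \ge n$, then $u_m \ge V_m^*$ --- delivers $u \ge V^*$, which combined with the elementary bound $u \le V^*$ finishes the proof. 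Notably, the paper states and proves that verification lemma but never explicitly invokes it; your argument shows what it is for. Your auxiliary checks (inf-compactness of the functions $w_{n,N}$ via closed subsets of the level sets of $c_n$, and monotone convergence through the kernel) are exactly the hypotheses Lemma \ref{critical_lemma} requires, so no step is missing.
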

\begin{proof}
We justify the statement by appealing to dynamic programming algorithm, we have $J_N(x) := 0$ for all $x \in X_N$, and by going backwards for $t=N-1,N-2,...,n$, and let
\begin{equation}
\label{eqq}
J_t(x) := \inf_{a \in A_t(x)}\{ c_t(x,a) + J_{t+1}[F_t(x,a,\xi)]\}.
\end{equation}
By backward iteration, for $t = N-1,...,n$, there exists $\pi_t \in \mathbb{F}_m$ such that $\pi_m(x) \in A_m(x)$ attains the minimum in the Equation (\ref{eqq}), and $\{ \pi_{N-1}, \pi_{N-2},...,\pi_n \}$ is an optimal policy. Moreover, $J_n$ is the optimal cost for 
\begin{equation}
J_n(x) := V^*_{n,N}(x_n),
\end{equation}
Hence, we have 
\begin{equation}
V_{n,N}^*(x) = \min_{a \in A_n(x)} \{ c_n(x,a) + V_{n+1,N}^*[F_n(x,a,\xi)] \}.
\end{equation}
Denoting $u(x) = \sup_{N>n} V^*_{n,N}(x)$, we have $u(x)$ is l.s.c. By Lemma \ref{critical_lemma}, we have 
\begin{equation}
V_n^*(x) = \min_{a \in A_n(x)} \{ c_n(x,a) + V_{n+1}^*[F_n(x,a,\xi)] \}.
\end{equation}Moreover, cost functions $c_n(x,a)$ being nonnegative, we have $u(x) \leq V_n^*(x)$. But by definition, we have $V_n^*(x) \leq u(x)$. Hence, we conclude the proof.
\end{proof} 

Intuitively, the theorem means that whenever $s_n \leq 0$ we have the risk neutral control problem where the policy is Markovian in the usual sense and hence whenever $s_n \leq 0$ we can solve the sub-problem after time $n$ using the classical dynamic programming principle. We treat the classical LQ-problem using risk sensitive AVaR operator to illustrate our results below and give a heuristic algorithm that specifies the decision rule at each time episode $n$ based on our results above.

\subsection{A Toolbox Example}
We solve the classical linear system with a quadratic one-stage cost problem with AVaR Criteria. Suppose we take $X = \mathbb{R}$ with a linear system 
\begin{equation}
x_{n+1} = x_n + a_n + Z_n,
\end{equation}
with $x_0 = 0$, $Z_n$ is i.i.d. standard normal i.e. $Z_n \sim \mathcal{N}(0,1)$. We take one stage cost functions as $c(x_n,a_n) = x_n^2 + a_n^2$ for $n = 0,1,...,N-1$. We also assume that the control constraint sets $A_n(x)$ with $x \in X$ are all equal to $A_n = \mathbb{R}$. Thus, under the above assumptions , we wish to find a policy that minimizes the performance criterion 
\begin{equation}
J(\pi,x) := \textrm{AVaR}_{\alpha}^\pi\bigg( \sum_{n=0}^{N-1} (x_n^2 + a_n^2) \bigg),
\end{equation}
It is well known that in risk neutral case using dynamic programming, the optimal policy $\pi^* = \{ f_0,...,f_{n-1} \}$ and the value function $J_n$ satisfy the following dyanimcs 
\begin{align*}
&f_{N-1}(x) = 0
\\
&f_n(x) = -(1 + K_{n+1})^{-1}K_{n+1}
\\
&K_N = 0
\\
&K_n = \bigg[ 1 - (1+ K_{n+1})^{-1}K_{n+1} \bigg]K_{n+1} + 1, \textrm{ for } n = 0,...,N-1
\\
&J_n(x) = K_nx^2 + \sum_{i = n+1}^{N-1}K_i,  \textrm{ for }n=0,...,N-1
\end{align*}
(see e.g. \cite{key-19}). In risk sensitive case, we proceed as follows.
We take $a_n = 0$ for $n = 0,...,N-1$, i.e. $\pi_0 = \{0,0,....0\}$ and let 
\begin{align*} 
s &:= \textrm{VaR}_{\alpha}(\sum_{n=0}^{N-1} c(x_n,a_n)) 
\\
&:= \inf \bigg\{ x \in \mathbb{R}: \mathbb{P}\bigg( \sum_{n=0}^{N-1} X_n^2 \leq x \bigg) \geq \alpha \bigg\}.
\end{align*} Then we check the global variable $s$. If $s \leq 0$, then we appeal to the risk neutral case and find the optimal policy accordingly. If $s > 0$, then we choose $a_0 = 0$, this makes the cost at time 0 minimal by definition. According to the output we go to time $n = 1$ and update our state to $x_1$ and repeat the procedure for each time episode $n = 0, ...,N-1$. We give the pesudocode of this algorithm below.
\begin{algorithm}
\caption{LQ Problem with AVaR algorithm}
\begin{algorithmic}[1]
\Procedure{LQ-AVaR Algorithm}{}
\State $s = \textrm{VaR}_{\alpha}^{\pi_0}(\sum_{n=0}^{N-1} X_n^2)$
\For{each $n \in N-1$ }
\If{ $s \leq 0$} 
\State apply Dynamic Programming from state $x_n$ at time $n$ onwards 
\Else 
\State Choose $a_n = 0$
\State Update $s = s - x_n^2$
\State Update $x_{n+1} = x_n + a_n + \xi_n(\omega)$
\EndIf
\EndFor
\EndProcedure
\end{algorithmic}
\end{algorithm}
\newpage

\section*{Acknowledgement}
We would like to thank Sergey Lototsky and Jianfeng Zhang  for  many  useful  comments  and  discussions.


\begin{thebibliography}{99}
\footnotesize

\bibitem{key-9}
{\sc Acciaio, B., Penner, I.} (2011). {\em Dynamic convex risk measures.}, In G. Di Nunno and B. Öksendal (Eds.), Advanced Mathematical Methods
for Finance, Springer, 1-34.

\bibitem{key-1}
{\sc Artzner, P., Delbaen, F., Eber, J.M., Heath, D.} (1999). {\em Coherent
measures of risk}, Math. Finance 9, 203-228.

\bibitem{key-2}
{\sc Aubin,J.-P., Frankowska, H.} (1978). {\em Set-Valued Analysis} Birkhauser,Boston,
1990.

\bibitem{key-4}
{\sc Bauerle, N., Ott J.} (2011). {\em Markov Decision Processes with
Average-Value-at-Risk Criteria}, Mathematical Methods of Operations
Research, 74, 361-379.

\bibitem{key-5}
{\sc Bauerle, N. , Rieder, U.} (2011). {\em Markov Decision Processes
with applications to finance}, Springer.

\bibitem{ref3}
{\sc Bellman, R.}
(1952).  {\em On the theory of dynamic programming} Proc. Natl. Acad. Sci 38, 716.

\bibitem{key-6}
{\sc Bertsekas, D., Shreve, S.E.} (1978). {\em Stochastic Optimal Control.
The Discrete Time Case}, Math. Program. Ser. B 125:235-261.

\bibitem{key-200}
{\sc Chung,K.J.,Sobel,M.J.} (1987). {\em Discounted MDPs: distribution functions and exponential utility maximization} SIAM J. Control Optimization., 25, 49-62.

\bibitem{key-12}
{\sc Ekeland, I., Temam, R.} (1974). {\em R. Convex Analysis and Variational
Problems}, Dunnod.

\bibitem{key-21}
{\sc Fleming,W., Sheu,S.} (1999). {\em Optimal long term growth rate of expected utility of wealth} Ann. Appl. Prob.,9. 871-903. 


\bibitem{key-17}
{\sc Filipovic, D. and Svindland, G.} (2012). {\em The canonical model space for law-invariant convex
risk measures is L1 }, Mathematical Finance 22(3), 585-589.

\bibitem{key-16}
{\sc Guo, X., Hernandez-Lerma, O.} (2012). {\em Nonstationary discrete-time deterministic
and stochastic control systems with infinite horizon}, International
Journal of Control, vol. 83, pp 1751-1757.

\bibitem{key-19}
{\sc  Hernandez-Lerma,O., Lasserre, J.B.} (1996). {\em Discrete-time Markov
Control Processes. Basic Optimality Criteria.}, Springer,New York.


\bibitem{key-18}
{\sc  Kupper, M., Schachermayer, W.} (2009). {\em Representation results
for law invariant time consistent functions},Mathematics and Financial
Economics 189-210.

\bibitem{key-8}
{\sc Rockafellar, R.T , Uryasev, S.} (2002). {\em Conditional-Value-at-Risk
for general loss distributions}, Journal of Banking and Finance 26,
1443-1471.

\bibitem{key-10}
{\sc Rockafellar, R.T., Wets, R.J.-B.} (1998). {\em Variational Analysis.}, Springer, Berlin.

\bibitem{key-11}
{\sc Ruschendorf, L., Kaina, M.} (2009). {\em On convex risk measures
on Lp-spaces}, Mathematical Methods in Operations Research, 475-495.

\bibitem{key-3}
{\sc Ruszcynski, A.} (1999). {\em Risk-averse dynamic programming for
Markov decision processes}, Math. Program. Ser. B 125:235-261.




\bibitem{key-14}
{\sc Ruszczynski, A. and Shapiro, A.} (2006). {\em Optimization
of convex risk functions}, Mathematics of Operations
Research, vol. 31, pp. 433-452.

\bibitem{key-15}
{\sc Shapiro, A.} (2012). {\em Time consistency of dynamic risk measures}, 
Operations Research Letters, vol. 40, pp. 436-439.

\bibitem{key-20}
{\sc  Xin, L., Shapiro, A.} (2009). {\em Bounds for
nested law invariant coherent risk measures}, Operations
Research Letters, vol. 40, pp. 431-435.

\bibitem{key-30}
{\sc Shapiro, A.} (2015).  {\em Rectangular sets of probability measures}, preprint.

\bibitem{key-31}
{\sc Epstein, L. G. and Schneider, M.} (2003).  {\em Recursive multiple-priors}, Journal of Economic Theory, 113, 1-31.

\bibitem{key-33}
{\sc Iyengar, G.N.} (2005).  {\em Robust Dynamic Programming}, Mathematics of Operations Research, 30, 257-280.

\bibitem{key-35} 
{\sc Rieder, U.} (1978). {\em Measurable Selection Theorems for Optimisation Problems}, Manuscripta Mathematica, 24, 115-131.
\end{thebibliography}
\end{document}